\newtheorem{thm}{Theorem}[section]
\newtheorem{cor}[thm]{Corollary}
\newtheorem{lem}[thm]{Lemma}
\newtheorem{prop}[thm]{Proposition}
\theoremstyle{definition}
\theoremstyle{remark}
\numberwithin{equation}{section}
\begin{document}
\title[J-Class abelian semigroups]{J-Class abelian semigroups of \\ matrices on
$\mathbb{C}^{n}$ and Hypercyclicity}

\author{Adlene Ayadi and Habib Marzougui}
\
\\
\address{Adlene Ayadi$^{1}$,  University of Gafsa, Faculty of Science of Gafsa, Department of Mathematics, Tunisia; Habib Marzougui$^{2}$, \
University of Carthage, Faculty of Science of Bizerte, Department of Mathematics, Zarzouna. 7021, Tunisia.}
\
\\
\email{habib.marzouki@fsb.rnu.tn; adleneso@yahoo.fr}

\thanks{This work is supported by the research unit: syst\`emes dynamiques et combinatoire: 99UR15-15}

\subjclass[2000]{47A16}

\keywords{Hypercyclic semigroup, locally hypercyclic, extended limit set, dense orbit, semigroup}

\begin{abstract} We give a characterization of hypercyclic finitely generated abelian semigroups of matrices on $\mathbb{C}^{n}$
using the extended limit sets (the J-sets). Moreover we construct for any $n\geq 2$ an abelian semigroup $G$ of GL$(n, \mathbb{C})$ generated
by $n+1$ diagonal matrices which is locally hypercyclic but not hypercyclic and such that J$_{G}(e_{k}) = \mathbb{C}^{n}$ for every $k= 1,\dots, n$,
where $(e_{1},\dots, e_{n})$ is the canonical basis of $\mathbb{C}^{n}$. This gives a negative answer to a question raised by Costakis and
Manoussos.
\end{abstract}
\maketitle

\section{\bf Introduction }
Let $M_{n}(\mathbb{C})$ be the set of all square matrices over $\mathbb{C}$ of order $n\geq 1$ and by GL($n,\mathbb{C})$ the group
of invertible matrices of  $M_{n}(\mathbb{C})$. Let $G$ be a finitely generated
abelian sub-semigroup of $M_{n}(\mathbb{C})$. For a vector $v\in \mathbb{C}^{n}$, we
consider the orbit of $G$ through $v$: $G(v) = \{Av: \ A\in G\}
\subset \mathbb{\mathbb{C}}^{n}$. A subset $E\subset \mathbb{C}^{n}$ is called \emph{$G$-invariant} if
$A(E)\subset E$  for any  $A\in G$. The orbit $G(v)\subset \mathbb{C}^{n}$ is \textit{dense} in ${\mathbb{C}}^{n}$ if $\overline{G(v)}=
{\mathbb{C}}^{n}$, where $\overline{E}$ denotes the closure of a subset $E\subset \mathbb{C}^{n}$. The semigroup $G$ is
called \textit{hypercyclic} if there exists a vector $v\in {\mathbb{C}}^{n}$ such that $G(v)$ is dense
 in ${\mathbb{C}}^{n}$. Hypercyclic is also called topologically transitive.
We refer the reader to the recent book \cite{bm} and \cite{p} for a thorough account on hypercyclicity.
 In \cite{chm2}, Costakis and Manoussos introduced the concept of extended limit set to $G$:
Suppose that $G$ is generated by $p$ matrices $A_{1},\dots,A_{p}$ $(p\geq 1)$ then for $x\in \mathbb{C}^{n}$, we define the extended
limit set J$_{G}(x)$ of $x$ under $G$ to be the set of $y\in \mathbb{C}^{n}$
for which there exists a sequence $(x_{m})_{m}\subset \mathbb{C}^{n}$ and
sequences of non-negative integers $\{k^{(j)}_{m}:\ m\in \mathbb{N}\}$ for $j = 1,2,\dots, p$
with $$ (1.1)\ \ \ \ \ \ k^{(1)}_{m} + k^{(2)}_{m}+\dots+ k^{(p)}_{m}\to+\infty$$
such that ~ $x_{m}\to x$ ~ and ~ $A_{1}^{k^{(1)}_{m}}A_{2}^{k^{(2)}_{m}}\dots A_{p}^{k^{(p)}_{m}}x_{m}\to y$.

Note that condition (1.1) is equivalent to having at least one of the
sequences $\{k^{(j)}_{m}:\ m\in\mathbb{N}\}$  for $j = 1,2,\dots, p$ containing a strictly increasing
subsequence tending to $+\infty$. We say that $G$ is \textit{locally hypercyclic} if there exists a vector
 $v\in {\mathbb{C}}^{n}\backslash\{0\}$ such that J$_{G}(v) = {\mathbb{C}}^{n}$. This notion is a ``localization`` of the
concept of hypercyclicity, this can be justified by the following: J$_{G}(x)= \mathbb{C}^{n}$ if and only if for every open neighborhood
$U_{x}\subset \mathbb{C}^{n}$ of $x$ and every nonempty open set $V\subset \mathbb{C}^{n}$ there exists
 $A\in G$ such that
$A(U_{x})\cap V\neq \emptyset$.

In $\mathbb{C}^{n}$, no matrice can be locally hypercyclic (see \cite{cm2}). However, what is rather remarkable is that in $\mathbb{C}^{n}$ or $\mathbb{R}^{n}$,
a pair of commuting matrices exists which forms a locally hypercyclic, non-hypercyclic semigroup.

The main purpose of this paper is twofold: firstly, we give a characterization of hypercyclic finitely generated abelian semigroup of
$M_{n}(\mathbb{C})$ through the use of the extended limit sets. We show that $G$ is hypercyclic if and only if there exists a vector
$v$ in an open set $V$, defined according to the structure of $G$, such that J$_{G}(v) = {\mathbb{C}}^{n}$ (Theorem \ref{t:1}).
Secondly, we answer negatively (Theorem \ref{t:12}) the following question raised by Costakis and Manoussos in
\cite{chm2}:
is it true that a locally hypercyclic abelian semigroup  $G$ generated by matrices
$A_{1},\dots,A_{p}$ is hypercyclic whenever J$_{G}(u_{k})=\mathbb{C}^{n}$, $k=1,\dots, n$, for a basis $(u_{1},\dots, u_{n})$
of $\mathbb{C}^{n}$? However, we prove that the question is true (see Proposition \ref{p:0}) for any abelian semigroup $G$
consisting of lower triangular matrices on $\mathbb{C}^{n}$ with all diagonal elements equal.
\bigskip
\
\\
Before stating our main results, let introduce the following notations and definitions.
Denote by:
\
\\
$\bullet$ $\mathcal{B}_{0}=(e_{1},\dots,e_{n})$ the canonical basis of $\mathbb{C}^{n}$.
\
\\
$\bullet$ $\mathbb{N}_{0} = \mathbb{N}\backslash\{0\}$.
\
\\
$\bullet$ $I_{n}$ the
identity matrix on $\mathbb{C}^{n}$.
\
\\
Let $n\in\mathbb{N}_{0}$ fixed. For each $m = 1, 2, \dots, n,$ denote by:
\
\\
$\bullet$ \; $\mathbb{T}_{m}(\mathbb{C})$ the set of matrices over $\mathbb{C}$ of the form:

$$\left[\begin{array}{cccc}
          \mu & \ & \ & 0 \\
          a_{2,1} & \ddots & \ & \ \\
          \vdots & \ddots & \ddots & \ \\
          a_{m,1} & \dots & a_{m,m-1} & \mu
        \end{array}
\right]\ \ \ \ (1)$$
\
\\
$\bullet$ \; $\mathbb{T}_{m}^{*}(\mathbb{C}) =
\mathbb{T}_{m}(\mathbb{C})\cap \textrm{GL}(m, \mathbb{C})$ the group of matrices of the form (1) with
$\mu\neq 0$.
\
\\
Let $r\in \mathbb{N}$ and $\eta = (n_{1},\dots,n_{r})$ be a sequence of positive integers such that $n_{1} + \dots + n_{r} = n.$
In particular, $r\leq n$. \
\\
Denote by
\
\\
\textbullet \; \ $\mathcal{K}_{\eta,r}(\mathbb{C}): = \mathbb{T}_{n_{1}}(\mathbb{C})\oplus\dots \oplus \mathbb{T}_{n_{r}}(\mathbb{C}).$
\
\\
\textbullet \; $\mathcal{K}^{*}_{\eta,r}(\mathbb{C}): =
\mathcal{K}_{\eta,r}(\mathbb{C})\cap \textrm{GL}(n, \ \mathbb{C})$.
\
\\
$\bullet$ $U := \underset{k=1}{\overset{r}{\prod}}(\mathbb{C}^{*}\times\mathbb{C}^{n_{k}-1})$.
\
\\
$\bullet$  $v^{T}$ the transpose of a vector $v\in\mathbb{C}^{n}$.
\
\\
$\bullet$ $u_{0} = [e_{1,1},\dots,e_{r,1}]^{T}\in \mathbb{C}^{n}$  where  $e_{k,1} = [1,0,\dots,
0]^{T}\in \mathbb{C}^{n_{k}}$,  $ k=1,\dots, r$.
\
\\
\medskip

Given any abelian sub-semigroup $G$ of $M_{n}(\mathbb{C})$, we introduce the triangular representation for $G$.
\medskip

\begin{prop}\label{p:1}$($\cite{aAhM10}, Proposition 2.2$)$ Let  $G$ be an abelian
sub-semigroup of $M_{n}(\mathbb{C})$. Then there exists a $P\in
\textrm{GL}(n, \mathbb{C})$ such that $PGP^{-1}\subset \mathcal{K}_{\eta,r}(\mathbb{C})$, for some
$\eta\in(\mathbb{N}_{0})^{r}$ and $r\in\{1,\dots,n\}$.
\end{prop}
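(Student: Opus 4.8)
The plan is to build the conjugating matrix $P$ from a simultaneous decomposition of $\mathbb{C}^{n}$ into the common generalized eigenspaces of a generating set. Write $G=\langle A_{1},\dots,A_{p}\rangle$ with the $A_{j}$ pairwise commuting. For a tuple $\lambda=(\lambda_{1},\dots,\lambda_{p})\in\mathbb{C}^{p}$ set $E_{\lambda}=\bigcap_{j=1}^{p}\ker(A_{j}-\lambda_{j}I_{n})^{n}$. Because the $A_{j}$ commute, each $E_{\lambda}$ is invariant under every $A_{j}$, hence $G$-invariant; and iterating the primary (generalized eigenspace) decomposition over $A_{1},A_{2},\dots,A_{p}$ in turn yields $\mathbb{C}^{n}=\bigoplus_{\lambda\in\Lambda}E_{\lambda}$, where $\Lambda\subset\mathbb{C}^{p}$ is the finite set of tuples with $E_{\lambda}\neq\{0\}$. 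Fix an enumeration $\Lambda=\{\lambda^{(1)},\dots,\lambda^{(r)}\}$ and put $n_{k}=\dim E_{\lambda^{(k)}}$, so that $r\in\{1,\dots,n\}$ and $\eta=(n_{1},\dots,n_{r})\in(\mathbb{N}_{0})^{r}$ with $n_{1}+\dots+n_{r}=n$.

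Next I would analyse the action on a single block. On $E_{\lambda^{(k)}}$ each generator restricts to $A_{j}|_{E_{\lambda^{(k)}}}=\lambda_{j}^{(k)}\,\mathrm{id}+N_{j,k}$, where $N_{j,k}$ is nilpotent (by the definition of the generalized eigenspace) and where the operators $N_{1,k},\dots,N_{p,k}$ pairwise commute. Since $\mathbb{C}$ is algebraically closed, a commuting family of nilpotent operators admits a common complete flag with respect to which every member is strictly lower triangular (the classical simultaneous triangularization of commuting operators, specialised to nilpotents, for which the forced diagonal is $0$; the lower-triangular orientation is obtained by choosing a decreasing invariant flag, or equivalently by upper-triangularizing the transposes). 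Choosing a basis of $E_{\lambda^{(k)}}$ adapted to such a flag makes every $N_{j,k}$ strictly lower triangular, and hence $A_{j}|_{E_{\lambda^{(k)}}}$ a matrix of the form (1) with $\mu=\lambda_{j}^{(k)}$, i.e. an element of $\mathbb{T}_{n_{k}}(\mathbb{C})$.

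Now assemble $P$: let it be the change of basis from $\mathcal{B}_{0}$ to the basis of $\mathbb{C}^{n}$ obtained by concatenating the chosen bases of $E_{\lambda^{(1)}},\dots,E_{\lambda^{(r)}}$ in this order. Because the $E_{\lambda^{(k)}}$ are $G$-invariant and complementary, every $PA_{j}P^{-1}$ is block diagonal with $k$-th block equal to the matrix of $A_{j}|_{E_{\lambda^{(k)}}}$ in $\mathbb{T}_{n_{k}}(\mathbb{C})$; thus $PA_{j}P^{-1}\in\mathcal{K}_{\eta,r}(\mathbb{C})$ for each generator. Finally, since each $\mathbb{T}_{m}(\mathbb{C})$ is closed under products — indeed $(\mu I+N)(\mu' I+N')=\mu\mu' I+(\mu N'+\mu'N+NN')$, with the parenthesised term strictly lower triangular — the set $\mathcal{K}_{\eta,r}(\mathbb{C})$ is a multiplicative semigroup, so the containment of the generators propagates to all products and hence $PGP^{-1}\subset\mathcal{K}_{\eta,r}(\mathbb{C})$.

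I expect the main obstacle to be the simultaneous triangularization step on each block: one must produce a \emph{single} basis of $E_{\lambda^{(k)}}$ that strictly-lower-triangularizes all of $N_{1,k},\dots,N_{p,k}$ at once. This follows from the standard induction — a commuting family of operators on a nonzero space over $\mathbb{C}$ has a common eigenvector, after which one passes to the quotient and repeats — but care is needed to keep the flag compatible across all $p$ generators simultaneously, and the nilpotency of the $N_{j,k}$ is precisely what forces the diagonal entries to vanish and thus delivers the \emph{constant} (here zero) diagonal demanded by the block type $\mathbb{T}_{n_{k}}(\mathbb{C})$.
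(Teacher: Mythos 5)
Your argument is the standard route to this result, and since the paper itself offers no proof of Proposition \ref{p:1} --- it is imported verbatim from \cite{aAhM10} --- there is nothing internal to compare against; the decomposition of $\mathbb{C}^{n}$ into common generalized eigenspaces, followed by simultaneous strict lower triangularization of the commuting nilpotent parts on each summand, is exactly the expected construction, and each step is carried out correctly: the $N_{j,k}$ do commute, their common eigenvalues are forced to be $0$, and $\mathcal{K}_{\eta,r}(\mathbb{C})$ is closed under products, so containment of the generators propagates to all of $G$.

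The one genuine gap is your very first line: you write $G=\langle A_{1},\dots,A_{p}\rangle$, but the proposition is stated for an \emph{arbitrary} abelian sub-semigroup of $M_{n}(\mathbb{C})$, with no finite-generation hypothesis (that hypothesis enters the paper only later, in Theorem \ref{t:1}, where the J-sets require a finite generating set). Your construction of the finite weight set $\Lambda$ and of the invariant flags uses the finite list of generators in an essential way, so as written the proof covers only the finitely generated case. The repair is short but must be said: $\mathrm{span}(G)$ is a finite-dimensional subspace of $M_{n}(\mathbb{C})$, so one may choose $A_{1},\dots,A_{p}\in G$ forming a basis of it and run your argument for these $p$ commuting matrices. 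Every $B\in G$ commutes with each $A_{j}$ and hence preserves each $E_{\lambda^{(k)}}$; moreover $B=\sum_{j}c_{j}A_{j}$ for some scalars $c_{j}$, and since $\mathcal{K}_{\eta,r}(\mathbb{C})$ is closed under linear combinations as well as products (all of its defining conditions --- block-diagonality, lower triangularity, constancy of the diagonal in each block --- are linear), it follows that $PBP^{-1}=\sum_{j}c_{j}\,PA_{j}P^{-1}\in\mathcal{K}_{\eta,r}(\mathbb{C})$. With that observation added, your proof establishes the proposition in its stated generality.
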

\
\\
This reduces the existence of a dense orbit to a question concerning sub-semigroups of $\mathcal{K}_{\eta,r}(\mathbb{C})$. For such a choice of matrix $P$,  we let:
\
\\
$\bullet$  $v_{0} = Pu_{0}$.\
\
\\
$\bullet$ $V = P(U)$, it is a dense open set in $\mathbb{C}^{n}$.
\
\\
Our principal results are the following:
\medskip

\begin{thm}\label{t:1} Let $G$ be a finitely generated abelian semigroup of matrices on $\mathbb{C}^{n}$. If
$J_{G}(v)=\mathbb{C}^{n}$ for some $v\in V$ then
$\overline{G(v)} = \mathbb{C}^{n}$.
\end{thm}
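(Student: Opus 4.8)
The plan is to prove the stronger inclusion
$$J_{G}(v)\subseteq\overline{G(v)}\qquad\text{for every }v\in V,$$
which settles the theorem in one line, since then $\mathbb{C}^{n}=J_{G}(v)\subseteq\overline{G(v)}\subseteq\mathbb{C}^{n}$. First I would normalize by conjugation. By Proposition \ref{p:1} pick $P\in\mathrm{GL}(n,\mathbb{C})$ with $\widetilde{G}:=PGP^{-1}\subset\mathcal{K}_{\eta,r}(\mathbb{C})$. Since $P$ is a homeomorphism intertwining the two actions, $\widetilde{G}(Pv)=P\,G(v)$ and $J_{\widetilde{G}}(Pv)=P\,J_{G}(v)$, so $P$ preserves both ``$J=\mathbb{C}^{n}$'' and density of the orbit, and by definition $v\in V=P(U)$ iff $Pv\in U$. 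Hence it suffices to treat $G\subset\mathcal{K}_{\eta,r}(\mathbb{C})$ with base point $u:=Pv\in U$.

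Next I would record the block structure. Each $g\in G$ is block-diagonal with $k$-th block $g^{(k)}=\mu_{k}(g)I_{n_{k}}+N_{k}(g)$, where $N_{k}(g)$ is strictly lower-triangular and $\mu_{k}(g)=\prod_{j}\mu_{j,k}^{\,k_{j}}$ for the exponent vector $\mathbf{k}=(k_{1},\dots,k_{p})$ of $g$. Because $u\in U$ has nonzero leading block-entries $u_{1}^{(k)}\neq 0$, two features emerge. First, as $N_{k}(g)$ kills the leading coordinate, $(gu)_{1}^{(k)}=\mu_{k}(g)\,u_{1}^{(k)}$, so the leading coordinate of $gu$ reads off the scalar $\mu_{k}(g)$. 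Second, factoring $g^{(k)}=\mu_{k}(g)\bigl(I+P_{k}(\mathbf{k})\bigr)$ (where $\mu_{j,k}\neq 0$), the strictly lower-triangular factor $P_{k}(\mathbf{k})$ has entries that are \emph{fixed} polynomials in $\mathbf{k}$ of degree $\le n_{k}-1$, with the $s$-th subdiagonal carrying degree $\le s$; in particular the first subdiagonal is the linear form $(P_{k})_{i+1,i}=\sum_{j}\mu_{j,k}^{-1}(N_{j,k})_{i+1,i}\,k_{j}$. This polynomial rigidity—all entries driven by one common exponent vector—is what lets the low-order data control the high-order growth.

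Now the core step. Take $y\in J_{G}(u)$ with witnesses $x_{m}\to u$, $g_{m}\in G$, $g_{m}x_{m}\to y$. Reading the leading block-coordinate and using $u_{1}^{(k)}\neq 0$ forces $\mu_{k}(g_{m})\to\nu_{k}:=y_{1}^{(k)}/u_{1}^{(k)}$. When $\nu_{k}\neq 0$ this gives $(I+P_{k}(\mathbf{k}^{(m)}))x_{m}^{(k)}\to\nu_{k}^{-1}y^{(k)}$, so $P_{k}(\mathbf{k}^{(m)})x_{m}^{(k)}$ stays bounded; reading the lower coordinates against $x_{m,1}^{(k)}\to u_{1}^{(k)}\neq 0$ then constrains the low-degree (in particular the linear) part of $P_{k}(\mathbf{k}^{(m)})$, and by the rigidity above this bounds the growth of every entry along the directions in which $\mathbf{k}^{(m)}$ blows up. The payoff is $g_{m}(u-x_{m})\to 0$: the diagonal part contributes $\mu_{k}(g_{m})(u-x_{m})^{(k)}\to 0$, while the nilpotent part $N_{k}(g_{m})(u-x_{m})^{(k)}\to 0$ because its growth is pinned by the same bounded low-order data while $u-x_{m}\to 0$. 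Thus $g_{m}u=g_{m}x_{m}+g_{m}(u-x_{m})\to y$, i.e.\ $y\in\overline{G(u)}$.

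The main obstacle is exactly this transfer from the perturbed point $x_{m}$ to $u$ against matrices $g_{m}$ whose norm may blow up. The delicate case is multi-generator blocks, where the second-coordinate data bounds only a linear form $\sum_{j}c_{j,k}k_{j}^{(m)}$ rather than each exponent: one must pass to a subsequence splitting $\mathbf{k}^{(m)}$ into bounded coordinate-directions (where $g_{m}$ stays bounded and the transfer is automatic) and unbounded directions (where the recovered low-order constraints and the polynomial rigidity must be combined across all blocks and subdiagonals to force $N_{k}(g_{m})(u-x_{m})\to 0$), together with a separate treatment of the degenerate scalars $\nu_{k}=0$ and of blocks where some $\mu_{j,k}=0$. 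It is precisely here that $u\in U$ is indispensable: if some $u_{1}^{(k)}=0$ the scalar $\mu_{k}(g_{m})$ and the subdiagonal data can no longer be read off, the transfer breaks, and—consistently with the counterexample of Theorem \ref{t:12}—local hypercyclicity need not upgrade to a dense orbit.
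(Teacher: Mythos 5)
There is a genuine gap, and it sits exactly at the step you yourself flag as ``delicate''. Your plan is to prove the inclusion $J_{G}(v)\subseteq\overline{G(v)}$ for \emph{every} $v\in V$, and your core argument uses the hypothesis $J_{G}(v)=\mathbb{C}^{n}$ only to supply witnesses for one target $y$ at a time. That inclusion is false in this generality. Take $n=3$ and let $G\subset\mathbb{T}_{3}(\mathbb{C})$ be the semigroup generated by the commuting matrices $A_{1}=I_{3}+E_{31}$ and $A_{2}=I_{3}+E_{32}$, where $E_{ij}$ is the matrix whose only nonzero entry is a $1$ in position $(i,j)$; here $r=1$, one may take $P=I_{3}$, so $V=U=\mathbb{C}^{*}\times\mathbb{C}^{2}$. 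Let $u=[1,-1,0]^{T}\in U$, fix $c\in\mathbb{C}\setminus\mathbb{Z}$, and put $x_{m}=[1,-1+c/m,0]^{T}$, $g_{m}=A_{1}^{m}A_{2}^{m}$ (so the exponent sum $2m\to+\infty$). Then $x_{m}\to u$ and $g_{m}x_{m}=[1,-1+c/m,c]^{T}\to y:=[1,-1,c]^{T}$, so $y\in J_{G}(u)$; yet $G(u)=\{[1,-1,j-k]^{T}:j,k\in\mathbb{N}\}$ lies in the closed set $\{1\}\times\{-1\}\times\mathbb{Z}$, so $y\notin\overline{G(u)}$. This example pinpoints the false step in your core argument: reading the lower coordinates of $P_{k}(\mathbf{k}^{(m)})x_{m}^{(k)}$ only controls certain \emph{linear forms} in the exponents (here $m\,x_{m,1}+m\,x_{m,2}=c$ stays bounded), not the individual entries (here both equal $m\to\infty$), and correspondingly $N(g_{m})(u-x_{m})=[0,0,-c]^{T}\not\to 0$. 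No subsequence-splitting or ``polynomial rigidity'' can repair this, because the conclusion you are aiming for is simply false for such $G$ and such $u$.

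What the paper does, and what your outline is missing, is to use the \emph{full} strength of $J_{G}(u)=\mathbb{C}^{n}$ before any transfer argument: Proposition \ref{p:25} shows that $J_{G}(u)=\mathbb{C}^{n}$ with $u\in U$ forces $\mathrm{rank}(F_{G_{k}})=n_{k}-1$ for every block. Note that in the counterexample above $\mathrm{rank}(F_{G})=1<2$, and consistently $J_{G}(u)\neq\mathbb{C}^{3}$; so the rank condition is exactly what excludes such semigroups. Under that rank condition the paper proves the boundedness lemma (Lemmas \ref{L:12} and \ref{L:11}, Corollary \ref{c:36}), via the parametrization $\varphi$ of Proposition \ref{p:2} and induction on $n$: if $x_{m}\to x\in U$ and $B_{m}x_{m}\to y\in U$ with $B_{m}\in G$, then $(B_{m})_{m}$ is bounded, after which the transfer is immediate from $\|B_{m}u-y\|\leq\|B_{m}\|\,\|u-x_{m}\|+\|B_{m}x_{m}-y\|$. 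Finally, the paper runs this only for targets $y$ in the dense set $U$ (which suffices, since one then gets $U\subset\overline{G(u)}$), so the degenerate cases $\nu_{k}=0$ that you defer never arise. To salvage your approach you would have to first derive the rank condition from the hypothesis (i.e., re-prove Proposition \ref{p:25}) and then explain how it restores entry-wise control of $N_{k}(g_{m})$ — at which point you would essentially be reconstructing the paper's chain of lemmas.
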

\medskip

\begin{cor}\label{c:1} Under the hypothesis of Theorem \ref{t:1}, the following are equivalent:
\begin{itemize}
\item [(i)] $G$ is hypercyclic.
\item [(ii)] $J_{G}(v_{0}) = \mathbb{C}^{n}$.
\item [(iii)] $\overline{G(v_{0})} = \mathbb{C}^{n}$.
\end{itemize}
\end{cor}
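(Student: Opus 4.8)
The plan is to prove the three soft implications $\mathrm{(iii)}\Rightarrow\mathrm{(i)}$, $\mathrm{(ii)}\Rightarrow\mathrm{(iii)}$ and $\mathrm{(iii)}\Rightarrow\mathrm{(ii)}$ by essentially formal arguments, and to reserve the real work for $\mathrm{(i)}\Rightarrow\mathrm{(iii)}$. First I would record the reduction. Since $e_{k,1}$ has nonzero first entry, $u_0\in U$, hence $v_0=Pu_0\in P(U)=V$; and because orbit-density, hypercyclicity and the extended limit sets are all invariant under the linear conjugacy by the matrix $P$ of Proposition \ref{p:1}, I may assume throughout that $G\subset\mathcal{K}_{\eta,r}(\mathbb{C})$, with $v_0=u_0$ and $V=U$. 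After this normalization every statement is about the single canonical vector $u_0$.

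The three soft implications go as follows. For $\mathrm{(iii)}\Rightarrow\mathrm{(i)}$ there is nothing to prove: $\overline{G(v_0)}=\mathbb{C}^n$ says exactly that $v_0$ is a hypercyclic vector. For $\mathrm{(ii)}\Rightarrow\mathrm{(iii)}$ I simply invoke Theorem \ref{t:1}: since $v_0\in V$, the hypothesis $J_G(v_0)=\mathbb{C}^n$ forces $\overline{G(v_0)}=\mathbb{C}^n$. For $\mathrm{(iii)}\Rightarrow\mathrm{(ii)}$ I use a general density argument. Fix $y\in\mathbb{C}^n$ and $\varepsilon>0$. Since $G(v_0)$ is dense and $\mathbb{C}^n$ has no isolated points, the ball $B(y,\varepsilon)$ meets $G(v_0)$ in infinitely many distinct points; as $G$ is generated by $A_1,\dots,A_p$, these points have the form $A_1^{k_1}\cdots A_p^{k_p}v_0$ for infinitely many distinct tuples $(k_1,\dots,k_p)$, and since $\{(k_1,\dots,k_p):k_1+\cdots+k_p\le N\}$ is finite for each $N$, we may extract such tuples with $k_1+\cdots+k_p\to+\infty$. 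Taking the constant sequence $x_m=v_0\to v_0$ then realizes $y$ as a point of $J_G(v_0)$; as $y$ was arbitrary, $J_G(v_0)=\mathbb{C}^n$.

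The substantive step is $\mathrm{(i)}\Rightarrow\mathrm{(iii)}$, i.e.\ passing from hypercyclicity at some unknown vector to hypercyclicity at the distinguished $u_0$. Let $w$ be a hypercyclic vector for $G$. Writing vectors in the block decomposition coming from $\mathcal{K}_{\eta,r}(\mathbb{C})$, the first row of the $k$-th diagonal block of every $A\in G$ is $[\mu_k,0,\dots,0]$, so the leading coordinate of the $k$-th block transforms by $(Az)_1^{(k)}=\mu_k\,z_1^{(k)}$. Hence each $W_k:=\{z:z_1^{(k)}=0\}$ is a proper $G$-invariant closed subspace, and a hypercyclic vector cannot lie in any $W_k$ (its orbit would stay inside $W_k\neq\mathbb{C}^n$); therefore $w\in U$. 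I would then transport the orbit of $w$ to that of $u_0$: build an invertible $C\in\mathcal{K}^{*}_{\eta,r}(\mathbb{C})$, block-diagonal with $C_k\in\mathbb{T}^{*}_{n_k}(\mathbb{C})$, such that $C$ commutes with $G$ and $Cu_0=w$ (equivalently $C_ke_{k,1}=w^{(k)}$ in each block). Once $C$ is available, commutativity gives $G(w)=G(Cu_0)=C\,G(u_0)$, whence $\overline{G(u_0)}=C^{-1}\overline{G(w)}=\mathbb{C}^n$, which is $\mathrm{(iii)}$. Combined with the soft implications this yields $\mathrm{(i)}\Leftrightarrow\mathrm{(iii)}$ and $\mathrm{(ii)}\Leftrightarrow\mathrm{(iii)}$, hence the equivalence of $\mathrm{(i)}$, $\mathrm{(ii)}$, $\mathrm{(iii)}$.

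The main obstacle is precisely the construction of the commuting intertwiner $C$. In each block one must produce an element of the commutant of the $k$-th diagonal blocks of $A_1,\dots,A_p$ that sends $e_{k,1}$ to the prescribed vector $w^{(k)}$ with nonzero leading entry; this amounts to showing that $e_{k,1}$ is a cyclic vector for that commutant and that the required $C_k$ can be taken invertible in $\mathbb{T}^{*}_{n_k}(\mathbb{C})$. For $1\times1$ blocks $C_k$ is merely a nonzero scalar and the claim is immediate, but in the presence of nontrivial nilpotent parts this is where the fine structure of finitely generated abelian subsemigroups of $\mathcal{K}_{\eta,r}(\mathbb{C})$ from \cite{aAhM10} is needed; equivalently, one proves the homogeneity statement that once $G$ is hypercyclic every vector of the open set $V$ is hypercyclic, so in particular $v_0\in V$ is. I expect this commutant/cyclicity analysis, rather than any manipulation of the limit sets, to be the crux of the proof.
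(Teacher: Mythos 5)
Your reduction to $G\subset\mathcal{K}_{\eta,r}(\mathbb{C})$, your implication (ii)$\Rightarrow$(iii) via Theorem \ref{t:1}, and the trivial (iii)$\Rightarrow$(i) all agree with the paper. Your extra argument for (iii)$\Rightarrow$(ii) (any ball around $y$ contains infinitely many distinct orbit points, hence tuples of unbounded total degree, and one takes $x_m=v_0$ constant) is correct, but it does not close the equivalence by itself: with only (ii)$\Rightarrow$(iii), (iii)$\Rightarrow$(ii) and (iii)$\Rightarrow$(i) in hand, everything still hinges on getting from (i) back to (ii) or (iii).

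That remaining implication is precisely where your proposal has a genuine gap, and you acknowledge it: your route (i)$\Rightarrow$(iii) requires an invertible $C\in\mathcal{K}^{*}_{\eta,r}(\mathbb{C})$ \emph{commuting with} $G$ such that $Cu_0=w$ for a hypercyclic vector $w$, and you never construct it. The tools stated in the paper do not hand it to you: Proposition \ref{p:2} (even after verifying its rank hypothesis from hypercyclicity, say via Lemma \ref{l:17} and the closedness of $H_w$) only provides a linear $\varphi$ with $\varphi(v)e_1=v$ and $G\subset\varphi(\mathbb{C}^n)$; it does not assert that $\varphi(w)$ commutes with $G$, so the identity $G(Cu_0)=CG(u_0)$ on which your argument rests is unavailable, and the ``homogeneity'' statement you invoke as an alternative is equally unproven. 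The point you missed is that the paper makes this step a one-liner: Proposition \ref{p:101}, quoted from \cite{chm2}, says $G$ is hypercyclic if and only if $J_G(x)=\mathbb{C}^n$ for \emph{every} $x\in\mathbb{C}^n$, so (i)$\Rightarrow$(ii) is immediate, and the cycle (i)$\Rightarrow$(ii)$\Rightarrow$(iii)$\Rightarrow$(i) closes with no commutant or intertwining analysis at all. In short: your soft implications are fine, but the step you correctly identify as the crux is exactly the one you leave open, and it is dispatched in the paper by a citation you overlooked.
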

\medskip

\begin{cor}\label{c:2} Under the hypothesis of Theorem \ref{t:1},
if $G$ is not hypercyclic then $E: = \{x\in \mathbb{C}^{n}:  J_{G}(x)= \mathbb{C}^{n}\}\subset \underset{k=1}{\overset{r}{\bigcup}}H_{k}$, ~$(r\leq n)$ ~ where ~ $H_{k}$ are $G$-invariant
vector subspaces of $\mathbb{C}^{n}$ with dimension $n-1$.
\end{cor}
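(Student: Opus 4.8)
The plan is to deduce the inclusion directly from Theorem \ref{t:1} by describing the complement of the open set $V$, and the first observation is that Theorem \ref{t:1} forces $E$ to miss $V$ entirely. Indeed, if some $v\in V$ satisfied $J_{G}(v)=\mathbb{C}^{n}$, then Theorem \ref{t:1} would give $\overline{G(v)}=\mathbb{C}^{n}$, making $G$ hypercyclic and contradicting the hypothesis. Hence no point of $V$ lies in $E$, that is $E\cap V=\emptyset$, so $E\subset \mathbb{C}^{n}\setminus V$. It then remains only to exhibit $\mathbb{C}^{n}\setminus V$ as a union of $r$ invariant hyperplanes.

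First I would pass to the triangular picture. Conjugating by the matrix $P$ of Proposition \ref{p:1} we have $PGP^{-1}\subset \mathcal{K}_{\eta,r}(\mathbb{C})$ and $V=P(U)$ with $U=\prod_{k=1}^{r}(\mathbb{C}^{*}\times\mathbb{C}^{n_{k}-1})$. Writing a vector $x\in\mathbb{C}^{n}$ in block form $x=(x^{(1)},\dots,x^{(r)})$ with $x^{(k)}\in\mathbb{C}^{n_{k}}$, membership $x\in U$ means precisely that the first coordinate $x_{1}^{(k)}$ of each block is nonzero. Consequently
$$ \mathbb{C}^{n}\setminus U = \bigcup_{k=1}^{r} W_{k}, \qquad W_{k}:=\{x\in\mathbb{C}^{n}:\ x_{1}^{(k)}=0\}, $$
and each $W_{k}$ is a vector subspace of $\mathbb{C}^{n}$ of dimension $n-1$.

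Next I would verify that each $W_{k}$ is invariant under $\mathcal{K}_{\eta,r}(\mathbb{C})$. A matrix $A\in\mathcal{K}_{\eta,r}(\mathbb{C})$ is block-diagonal with blocks $A^{(k)}\in\mathbb{T}_{n_{k}}(\mathbb{C})$, and each $A^{(k)}$ is lower triangular of the form (1), whose first row is $[\mu,0,\dots,0]$. Hence $(A^{(k)}x^{(k)})_{1}=\mu\,x_{1}^{(k)}$, so $x_{1}^{(k)}=0$ implies $(Ax)_{1}^{(k)}=0$; that is, $A(W_{k})\subset W_{k}$. Setting $H_{k}:=P(W_{k})$, the invariance of $W_{k}$ under $PGP^{-1}$ transfers to invariance of $H_{k}$ under $G$, and since $P$ is invertible each $H_{k}$ is a subspace of dimension $n-1$.

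Finally I would assemble the pieces: applying the invertible map $P$ to the displayed identity gives
$$ \mathbb{C}^{n}\setminus V = P(\mathbb{C}^{n}\setminus U) = \bigcup_{k=1}^{r} P(W_{k}) = \bigcup_{k=1}^{r} H_{k}, $$
and combining this with $E\subset \mathbb{C}^{n}\setminus V$ yields $E\subset \bigcup_{k=1}^{r} H_{k}$, as desired. The argument is essentially bookkeeping once Theorem \ref{t:1} is available; the only substantive point---and the sole place where the block structure of $\mathcal{K}_{\eta,r}(\mathbb{C})$ is genuinely used---is the invariance of the hyperplanes $W_{k}$, which rests on the lower-triangular form (1) forcing the leading coordinate of each block to be scaled rather than mixed with the others.
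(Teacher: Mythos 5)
Your proof is correct and follows essentially the same route as the paper's: Theorem \ref{t:1} gives $E\cap V=\emptyset$, and then $\mathbb{C}^{n}\setminus V$ is exhibited as the union of the $r$ sets $H_{k}$ obtained by applying $P$ to the coordinate hyperplanes where the leading entry of the $k$-th block vanishes. The only difference is that you also verify the $G$-invariance and the dimension count of the $H_{k}$, which the paper asserts without proof; note that with the paper's stated convention $PGP^{-1}\subset\mathcal{K}_{\eta,r}(\mathbb{C})$ the subspace invariant under $G$ is literally $P^{-1}(W_{k})$ rather than $P(W_{k})$, a harmless $P$-versus-$P^{-1}$ slip that your write-up shares with the paper itself.
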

\medskip
\
\\
\textbf{Remark}. In the case $n=1$, we have $V=\mathbb{C}^{*}$ and by Theorem ~\ref{t:1}, we conclude that a
sub-semigroup $G$ of $\mathbb{C}$ is hypercylic if and only if it is locally hypercyclic.
\bigskip

\begin{thm}\label{t:12} Let $n\geq 2$ be an integer. Then there exists an abelian semigroup $G$
generated by diagonal matrices $A_{1},\dots,A_{n+1}\in \textrm{GL}(n,\mathbb{C})$ which is not hypercyclic such that
J$_{G}(e_{k})= \mathbb{C}^{n}$ for every $k=1,\dots, n$.
\end{thm}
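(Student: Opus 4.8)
The plan is to put everything on the diagonal, where each $e_k$ is an eigenvector, so that the entire problem becomes one about eigenvalues. Write a generator as $A_j=\mathrm{diag}(a_{j,1},\dots,a_{j,n})$ and prescribe $a_{j,i}=\exp(\alpha_{j,i})$ for complex numbers $\alpha_{j,i}$ to be chosen; a word $A_1^{k_1}\cdots A_{n+1}^{k_{n+1}}$ with $k_j\in\mathbb N$ is then diagonal with $i$-th entry $b_i(k)=\exp\big(\sum_j k_j\alpha_{j,i}\big)$. Using the neighbourhood characterisation of $J_G$ recalled in the introduction, I first check that for diagonal $G$ the equality $J_G(e_{k^\ast})=\mathbb C^n$ is equivalent to the following: for every $y\in\mathbb C^n$ and every $R>0$ there is a word with $b_{k^\ast}(k)$ as close as wished to $y_{k^\ast}$ while $|b_i(k)|>R$ for all $i\neq k^\ast$. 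Indeed one then takes $x_m\to e_{k^\ast}$ with $(x_m)_{k^\ast}=1$ and $(x_m)_i=y_i/b_i(k)\to 0$, so the image converges to $y$. For non-hypercyclicity I instead force a global obstruction: if every generator satisfies $\prod_i|a_{j,i}|\ge 1$, i.e. $\sum_i\mathrm{Re}\,\alpha_{j,i}\ge 0$, then each orbit of a full-support vector stays in $\{\prod_i|z_i|\ge\prod_i|v_i|\}$ and every other orbit lies in a coordinate hyperplane, so no orbit can be dense.

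The construction is to pick strictly positive weight vectors $\mu^{(i)}\in(0,\infty)^{n+1}$ and numbers $\alpha_{j,i}=x_{j,i}+\sqrt{-1}\,y_{j,i}$ so that, for every axis $i$,
\[
\sum_{j}\mu^{(i)}_j\,\alpha_{j,i}=0,\qquad \sum_{j}\mu^{(i)}_j\,x_{j,i'}>0\ \ (i'\neq i),\qquad \sum_i x_{j,i}\ge 0\ \ (\forall j),
\]
and so that for each $i$ the $n+1$ points $\alpha_{1,i},\dots,\alpha_{n+1,i}$ generate a dense subgroup of $\mathbb C$. The first identity says $\mu^{(i)}$ is a strictly positive vector in the kernel of axis $i$ (so that axis is frozen along the direction $\mu^{(i)}$); the inequalities say this same direction expands every other axis; the last is the non-hypercyclicity constraint. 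The real parts are produced explicitly: for $n\ge 3$ take $A_{n+1}$ unimodular ($x_{n+1,i}=0$), $x_{i,i}=-1$, $x_{j,i}=1$ for $j\le n,\ j\neq i$, and weights $\mu^{(i)}_i=n-1$, $\mu^{(i)}_j=1$ otherwise; a direct check gives $\sum_j\mu^{(i)}_jx_{j,i}=0$, $\sum_j\mu^{(i)}_jx_{j,i'}=2(n-2)>0$, and $\sum_i x_{j,i}\ge 0$, while a similar explicit asymmetric choice settles $n=2$. Finally the imaginary parts are chosen to satisfy only the linear constraints $\sum_j\mu^{(i)}_jy_{j,i}=0$, with otherwise $\mathbb Q$-generic values, which makes each axial subgroup dense in $\mathbb C$.

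With $G=\langle A_1,\dots,A_{n+1}\rangle$ in hand, non-hypercyclicity is immediate from $\sum_ix_{j,i}\ge 0$ as above, and it remains to prove $J_G(e_{k^\ast})=\mathbb C^n$. Fix $y\in\mathbb C^n$. Since the axial subgroup on axis $k^\ast$ is dense in $\mathbb C$, I choose integers $n_j$ with $\sum_j n_j\alpha_{j,k^\ast}$ within any prescribed accuracy of $\log y_{k^\ast}$ (for $y_{k^\ast}=0$ one pushes the real part to $-\infty$, still possible by density). The pump comes from $\mu^{(k^\ast)}$: the closure of $\{(N\mu^{(k^\ast)}_j\bmod 1)_j:N\in\mathbb N\}$ is a subgroup of the torus and so accumulates at $0$, giving $N_t\to\infty$ with all $\{N_t\mu^{(k^\ast)}_j\}\to0$; setting $p_j^{(t)}=\lfloor N_t\mu^{(k^\ast)}_j\rfloor$ and using $\sum_j\mu^{(k^\ast)}_j\alpha_{j,k^\ast}=0$ yields $\sum_j p_j^{(t)}\alpha_{j,k^\ast}=-\sum_j\{N_t\mu^{(k^\ast)}_j\}\alpha_{j,k^\ast}\to0$, whereas for $i\neq k^\ast$ the real part $\mathrm{Re}\sum_j p_j^{(t)}\alpha_{j,i}=N_t\big(\sum_j\mu^{(k^\ast)}_j x_{j,i}\big)+O(1)\to+\infty$ by the strict inequality. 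Adding the two pieces, $k_j:=n_j+p_j^{(t)}\in\mathbb N$ for $t$ large, $|k|\to\infty$, $b_{k^\ast}(k)\to y_{k^\ast}$ and $|b_i(k)|\to\infty$ for $i\neq k^\ast$; by the reduction of the first paragraph this gives $y\in J_G(e_{k^\ast})$.

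The main obstacle is exactly this simultaneous control: keeping one coordinate dense in all of $\mathbb C$ while every other coordinate escapes to infinity, using a single fixed finite commuting family. The device that makes it work is the splitting of a word into a dense-subgroup representative (controlling the frozen coordinate) plus an integer pump $\lfloor N_t\mu^{(k^\ast)}\rfloor$ along a strictly positive kernel direction (driving the others up), the crucial analytic input being that the fractional parts of $N_t\mu^{(k^\ast)}$ can be sent to $0$. The secondary technical points are the feasibility of the real-part linear system for every $n$ (handled by the explicit choice above) and the density of each axial subgroup of $\mathbb C$ (a genericity condition on the imaginary parts).
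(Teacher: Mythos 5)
Your architecture is genuinely different from the paper's: additive coordinates $a_{j,i}=\exp(\alpha_{j,i})$, a strictly positive kernel direction $\mu^{(k^{*})}$ used as a pump to blow up all coordinates except the frozen one, and a modulus-product invariant for non-hypercyclicity. The reduction in your first paragraph is sound, and the pump mechanism works modulo a small fix (replace $\lfloor N_{t}\mu_{j}\rfloor$ by the nearest integer: $N_{t}\mu\bmod 1\to 0$ on the torus only forces the distance to $\mathbb{Z}$ to vanish, not the fractional parts, e.g.\ for $\mu=(\alpha,1-\alpha)$ with $\alpha$ irrational the fractional parts can never all tend to $0$). But the construction you actually exhibit fails at the step you dismissed as a genericity condition. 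With your explicit real parts $x_{j,i}\in\{-1,0,1\}$, every integer combination $\sum_{j}k_{j}\alpha_{j,i}$ has real part in $\mathbb{Z}$, so the axial group $\Gamma_{i}=\sum_{j}\mathbb{Z}\alpha_{j,i}$ lies in $\{z:\mathrm{Re}\,z\in\mathbb{Z}\}$ and can never be dense in $\mathbb{C}$, no matter how the imaginary parts are chosen; genericity of the $y_{j,i}$ cannot repair fixed integer $x_{j,i}$. What your $J$-set argument really needs is that $\exp(\Gamma_{k^{*}})$ be dense in $\mathbb{C}$, and that fails too: $|\exp(z)|\in\{e^{m}:m\in\mathbb{Z}\}$ for $z\in\Gamma_{k^{*}}$, so $\exp(\Gamma_{k^{*}})$ lies in the closed, nowhere dense set $\{0\}\cup\bigcup_{m\in\mathbb{Z}}\{|w|=e^{m}\}$, and any target $y$ with $|y_{k}|$ strictly between consecutive powers of $e$ is unreachable; hence for your $G$ one actually has $J_{G}(e_{k})\neq\mathbb{C}^{n}$. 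The unaddressed case $n=2$ has a further obstruction: with rational weights (as in your scheme), clearing denominators turns $\sum_{j}\mu^{(i)}_{j}\alpha_{j,i}=0$ into an integral relation among the three numbers $\alpha_{j,i}$, collapsing $\Gamma_{i}$ to the $\mathbb{Z}$-span of two complex numbers, which is a lattice or lies in a line and is never dense. A correct version of your approach would need incommensurable real parts and irrational kernel directions, with all your sign conditions re-verified; none of that is done, so the proof as written does not establish the theorem.

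For contrast, the paper sidesteps all of this with a single analytic input, Feldman's Lemma \ref{l:78}: choose $|a|>1$ and $1/|a|<|b|<1$ with $\{a^{k}b^{l}:k,l\in\mathbb{N}\}$ dense in $\mathbb{C}$, take $B=bI_{n}$ and $A_{k}=\mathrm{diag}(a,\dots,a,1,a,\dots,a)$ with $1$ in slot $k$; then $B_{m}=A_{s}^{i_{m}}B^{j_{m}}A_{k}^{j_{m}}$ has $k$-th entry $a^{i_{m}}b^{j_{m}}\to y_{k}$ while every other entry carries an extra factor $(ab)^{j_{m}}\to\infty$, which is exactly the simultaneous control your scheme tries to manufacture. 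Non-hypercyclicity is proved there by noting that orbit coordinate ratios are powers of $a$, so $\overline{G(u_{0})}$ sits in a countable union of nowhere dense sets (Baire). Your modulus-product obstruction is a nice, more elementary substitute for that Baire argument, but the density half of your construction is the part that collapses.
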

\medskip

\begin{prop}\label{p:0} Let  $G$ be an abelian
sub-semigroup of $\mathbb{T}_{n}(\mathbb{C})$. If there exists a basis $(e^{\prime}_{1},\dots,e^{\prime}_{n})$ of $\mathbb{C}^{n}$ such that J$_{G}(e^{\prime}_{k}) = \mathbb{C}^{n}$
for every $k=1,\dots, n$, then $G$ is hypercyclic.
\end{prop}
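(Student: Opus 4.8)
The plan is to deduce the conclusion directly from Theorem~\ref{t:1}, so the entire task reduces to exhibiting one of the given vectors inside the distinguished open set $V$ attached to $G$. First I would fix the triangular data. Since $G\subset\mathbb{T}_{n}(\mathbb{C})=\mathcal{K}_{\eta,r}(\mathbb{C})$ with $\eta=(n)$ and $r=1$, the conjugating matrix supplied by Proposition~\ref{p:1} may be taken to be $P=I_{n}$. With these choices $U=\mathbb{C}^{*}\times\mathbb{C}^{n-1}$ and hence $V=P(U)=\mathbb{C}^{*}\times\mathbb{C}^{n-1}$; in other words $V$ is exactly the set of vectors whose first coordinate is nonzero, and $v_{0}=Pu_{0}=e_{1}$.

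Next I would observe that at least one of the basis vectors must land in $V$. Indeed, $\{x\in\mathbb{C}^{n}:x_{1}=0\}$ is a vector subspace of dimension $n-1$, so the $n$ linearly independent vectors $e'_{1},\dots,e'_{n}$ cannot all lie in it; thus there is an index $k_{0}$ with $(e'_{k_{0}})_{1}\neq 0$, that is, $e'_{k_{0}}\in V$. By hypothesis $J_{G}(e'_{k_{0}})=\mathbb{C}^{n}$, so Theorem~\ref{t:1} applies with $v=e'_{k_{0}}$ and yields $\overline{G(e'_{k_{0}})}=\mathbb{C}^{n}$. Therefore $G$ is hypercyclic, which is the desired conclusion; equivalently one could finish through Corollary~\ref{c:1}.

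Two points deserve care, though neither is a genuine obstacle since the substantive work has already been done in Theorem~\ref{t:1}. The first is the identification $V=\mathbb{C}^{*}\times\mathbb{C}^{n-1}$, which rests on the equal‑diagonal hypothesis forcing the single block $r=1$; this is precisely where the assumption enters. For diagonal semigroups one instead has $r=n$ and $U=(\mathbb{C}^{*})^{n}$, into which no canonical basis vector $e_{k}$ falls, and that is exactly what makes the counterexample of Theorem~\ref{t:12} possible. The second is that $J_{G}$ is only defined for a finitely generated $G$, so I would note this is implicit in the statement and that Theorem~\ref{t:1} is therefore applicable. It is worth emphasizing that the full basis hypothesis is used only to guarantee the existence of the single vector $e'_{k_{0}}\in V$; once that vector is in hand the argument is immediate.
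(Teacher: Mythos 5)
Your proof is correct and is essentially the same as the paper's: both identify $V=U=\mathbb{C}^{*}\times\mathbb{C}^{n-1}$ (taking $P=I_{n}$ since $G$ already lies in $\mathbb{T}_{n}(\mathbb{C})$), observe that a basis cannot be contained in the hyperplane $\{x_{1}=0\}$ so some $e'_{k_{0}}\in V$, and then apply Theorem~\ref{t:1} to conclude $\overline{G(e'_{k_{0}})}=\mathbb{C}^{n}$. Your write-up merely spells out the justifications (the choice of $P$, the dimension count, the implicit finite generation) that the paper leaves tacit.
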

\medskip

\section{\bf Preliminaries and basic notions}
\
\\
Let $G$ be a sub-semigroup of $\mathbb{T}_{n}(\mathbb{C})$. Every element $B\in G$ is written in the form
$$B = \left[\begin{array}{cc}
                       B^{(1)} & 0 \\
                       L_{B} & \mu_{B}
                     \end{array}
\right]$$ with $B^{(1)}\in \mathbb{T}_{n-1}(\mathbb{C}), \ L_{B}\in M_{1,n-1}(\mathbb{C})\ \ \mathrm{and} \ \ \mu_{B}\in\mathbb{C}$.
\
\\
Denote by 
\
\\
- $G^{(1)} = \{B^{(1)}:\ B\in G\}$.
\
\\
- $F_{G} = \textrm{vect}\big(\{(B - \mu_{B}I_{n})e_{i}\in \mathbb{C}^{n}: \ 1\leq i\leq n-1, \
B\in G\}\big)$ the vector subspace generated by the family of vectors
\
\\
$\big\{(B - \mu_{B}I_{n})e_{i}\in \mathbb{C}^{n}: \ 1\leq i\leq n-1, \
B\in G\big\}$.
\
\\
- rank$(F_{G})$ the rank of $F_{G}$. We have rank$(F_{G})\leq n-1$.
\
\\
For every $x = [x_{1},\dots,x_{n}]^{T}\in \mathbb{C}^{n}$, we let
$x^{(1)} = [x_{1},\dots,x_{n-1}]^{T}\in \mathbb{C}^{n-1}$ . We have  $x = [x^{(1)},x_{n}]^{T}$  and $e_{k} = [e^{(1)}_{k},0]^{T}$,
$k = 1,\dots, n-2$.
\
\\
- $F_{G}^{(1)} = \textrm{vect}\big(\{(B^{(1)} - \mu_{B}I_{n-1})e^{(1)}_{k}:\ \ 1\leq k\leq n-2,\ \ B\in G \}\big)$.
\bigskip
\
\\
For every $B\in \mathcal{K}_{n,r}(\mathbb{C})$, write $B = \textrm{diag}
\left(B_{1},\dots, B_{r}\right)$ where $B_{k}\in
\mathbb{T}_{n_{k}}(\mathbb{C})$.
\
\\
If $G$ is an abelian sub-semigroup of $\mathcal{K}_{n,r}(\mathbb{C})$, denote by:
\
\\ ~ $G_{k} = \{B_{k}\ : \ B\in G\}$, it is an abelian
sub-semigroup of $\mathbb{T}_{n_{k}}(\mathbb{C})$.
\
\\
\\
For every $x=[x_{1},\dots,x_{r}]^{T}\in \mathbb{C}^{n}$ where $x_{k}=[x_{k,1},\dots,x_{k,n_{k}}]^{T}\in\mathbb{C}^{n_{k}}$, we let:
\
\\
- $H_{x_{k}} = \mathbb{C}x_{k}+F_{G_{k}}$, $k=1,\dots,r$.
\
\\
- $H_{x} = \underset{k=1}{\overset{r}{\bigoplus}}H_{x_{k}}$.
\
\\
\medskip

We start with the following lemmas:
\medskip

\begin{lem}\label{l:17} Let $G$ be an abelian sub-semigroup of $\mathcal{K}_{\eta,r}(\mathbb{C})$. Under the notations above, for every
$x\in\mathbb{C}^{n}$, $H_{x}$ is $G$-invariant.
\end{lem}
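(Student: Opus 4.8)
The plan is to reduce to a single Jordan-type block using the block-diagonal structure, and then to combine two ingredients: that the nilpotent part of each $B\in G$ annihilates the last basis vector of its block, and that the abelian hypothesis lets one slide $A$ past $B-\mu_B I$. First I would reduce to one block. Since every $A\in G$ has the form $A=\textrm{diag}(A_1,\dots,A_r)$ with $A_k\in G_k\subset\mathbb{T}_{n_k}(\mathbb{C})$, and since $H_x=\underset{k=1}{\overset{r}{\bigoplus}}H_{x_k}$ with $H_{x_k}\subset\mathbb{C}^{n_k}$, the map $A$ acts blockwise, so $A(H_x)=\underset{k=1}{\overset{r}{\bigoplus}}A_k(H_{x_k})$. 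Hence it suffices to prove, for each $k$, that $A_k(H_{x_k})\subset H_{x_k}$; that is, to establish the following single-block statement: if $\mathcal{G}$ is an abelian sub-semigroup of $\mathbb{T}_m(\mathbb{C})$ and $y\in\mathbb{C}^m$, then $\mathbb{C}y+F_{\mathcal{G}}$ is $\mathcal{G}$-invariant, where $F_{\mathcal{G}}=\textrm{vect}\big(\{(B-\mu_B I_m)e_i:\ 1\le i\le m-1,\ B\in\mathcal{G}\}\big)$.

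Next I would isolate the key observation. For any $B\in\mathcal{G}$ the matrix $B-\mu_B I_m$ is strictly lower triangular, so its last column vanishes: $(B-\mu_B I_m)e_m=0$. Consequently, for an arbitrary vector $z=\sum_{i=1}^m z_i e_i$ one has
$$(B-\mu_B I_m)z=\sum_{i=1}^{m-1} z_i\,(B-\mu_B I_m)e_i\in F_{\mathcal{G}},$$
since the only surviving terms are exactly the generators of $F_{\mathcal{G}}$. In other words, the image of $B-\mu_B I_m$ is contained in $F_{\mathcal{G}}$ for every $B\in\mathcal{G}$.

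Finally I would verify invariance in two pieces for a fixed $A\in\mathcal{G}$. For the line $\mathbb{C}y$, writing $Ay=\mu_A y+(A-\mu_A I_m)y$, the second summand lies in $F_{\mathcal{G}}$ by the observation above, so $Ay\in\mathbb{C}y+F_{\mathcal{G}}$. For $F_{\mathcal{G}}$ itself it is enough to treat a single generator $(B-\mu_B I_m)e_i$; here the abelian hypothesis enters, since $AB=BA$ gives $A(B-\mu_B I_m)=(B-\mu_B I_m)A$, whence $A\,(B-\mu_B I_m)e_i=(B-\mu_B I_m)(Ae_i)\in F_{\mathcal{G}}$ by applying the observation to $z=Ae_i$. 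Thus $A(F_{\mathcal{G}})\subset F_{\mathcal{G}}$, and combining the two pieces gives $A(\mathbb{C}y+F_{\mathcal{G}})\subset\mathbb{C}y+F_{\mathcal{G}}$, which is the single-block claim.

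I expect the only genuine subtlety to be the use of commutativity in the last step: without it, $A\,(B-\mu_B I_m)e_i$ need not return to $F_{\mathcal{G}}$, so the abelian assumption is precisely what closes the argument. The vanishing of the final column of $B-\mu_B I_m$ is what makes the definition of $F_{\mathcal{G}}$ (running only over $1\le i\le m-1$) interact cleanly with multiplication, and everything else is routine bookkeeping with strictly lower triangular matrices.
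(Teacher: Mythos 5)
Your proof is correct, and its engine --- the blockwise reduction plus the observation that $(B-\mu_B I_m)e_m=0$, hence that the image of $B-\mu_B I_m$ lies in $F_{\mathcal{G}}$ --- is exactly the engine of the paper's proof. The difference is in how the final verification is organized. The paper does it in one stroke: for an \emph{arbitrary} $w\in H_x$ it writes $Bw=\mu_B w+(B-\mu_B I_m)w$; the first term is in $H_x$ because $w$ is and $H_x$ is a vector space, the second is in $F_{\mathcal{G}}\subset H_x$ by the key observation, so $Bw\in H_x$ with no case split and no further input. You instead check the line $\mathbb{C}y$ and the subspace $F_{\mathcal{G}}$ separately, and you invoke commutativity ($AB=BA$) to handle the generators of $F_{\mathcal{G}}$. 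That step is valid since $G$ is assumed abelian, but your closing claim that commutativity is ``precisely what closes the argument'' is mistaken: it is never needed. Indeed, $A(F_{\mathcal{G}})\subset F_{\mathcal{G}}$ already follows from your own key observation, because for any $z\in F_{\mathcal{G}}$ one has $Az=\mu_A z+(A-\mu_A I_m)z$, where $\mu_A z\in F_{\mathcal{G}}$ (as $F_{\mathcal{G}}$ is a vector space containing $z$) and $(A-\mu_A I_m)z\in F_{\mathcal{G}}$ (as the image of $A-\mu_A I_m$ is contained in $F_{\mathcal{G}}$). So the lemma holds for any, not necessarily abelian, sub-semigroup of $\mathcal{K}_{\eta,r}(\mathbb{C})$; the abelian hypothesis earns its keep elsewhere in the paper (for instance in the simultaneous triangularization of Proposition \ref{p:1}), not in this invariance argument.
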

\medskip

\begin{proof} It suffices to prove that $H_{x_{k}}$ is $G_{k}$-invariant: write $x=x_{k}$ and
\
\\
$G=G_{k}$. In this case $H_{x} = \mathbb{C}x+F_{G}$.
 Let $w =[w_{1},\dots,w_{n}]^{T}\in H_{x}$  and $B\in G$ with eigenvalue $\mu$. We have
 $Bw = \mu w+(B-\mu I_{n})w = \mu w+\underset{i=1}{\overset{n-1}{\sum}}w_{k}(B-\mu I_{n})e_{i}$. Since
$w, \ (B-\mu I_{n})e_{i}\in H_{x}$ and $H_{x}$ is a vector space, we have $Bw\in H_{x}$.
\end{proof}
\bigskip

\begin{prop}\label{p:25} Let $G$ be an abelian sub-semigroup of $\mathcal{K}_{\eta,r}(\mathbb{C})$ generated by $A_{1},\dots,A_{p}$.
If $J_{G}(u)=\mathbb{C}^{n}$ for some $u\in U$ then for every $k=1,\dots, r$, $\mathrm{rank}(F_{G_{k}}) = n_{k}-1$.
\end{prop}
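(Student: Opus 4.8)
The plan is to reduce the statement to a single diagonal block and then to induct on the size of that block, the crux being a two-dimensional quotient on which $G$ acts by scalars. First I would fix $k_{0}$ and project: writing $\pi_{k_{0}}\colon\mathbb{C}^{n}\to\mathbb{C}^{n_{k_{0}}}$ for the block projection, the identity $\pi_{k_{0}}(Bv)=B_{k_{0}}\,\pi_{k_{0}}(v)$ (valid since elements of $\mathcal{K}_{\eta,r}(\mathbb{C})$ are block diagonal) shows that applying $\pi_{k_{0}}$ to the sequences witnessing $J_{G}(u)=\mathbb{C}^{n}$, while keeping the same exponents $k^{(j)}_{m}$ (so the total degree still tends to $+\infty$), yields $J_{G_{k_{0}}}(u_{k_{0}})=\mathbb{C}^{n_{k_{0}}}$. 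Moreover $u\in U$ forces the first coordinate of $u_{k_{0}}$ to be nonzero. Hence it suffices to prove: if $G$ is an abelian sub-semigroup of $\mathbb{T}_{m}(\mathbb{C})$ with $J_{G}(u)=\mathbb{C}^{m}$ for some $u$ whose first coordinate is nonzero, then $\mathrm{rank}(F_{G})=m-1$.

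I would establish this by induction on $m$, the case $m=1$ being trivial ($F_{G}=\{0\}$, rank $0$). For the inductive step I drop the last coordinate via $\rho\colon\mathbb{C}^{m}\to\mathbb{C}^{m-1}$, $\rho(x)=x^{(1)}$. Since $\rho(Bv)=B^{(1)}v^{(1)}$, the same projection argument gives $J_{G^{(1)}}(u^{(1)})=\mathbb{C}^{m-1}$ with $u^{(1)}$ again having nonzero first coordinate, so the induction hypothesis yields $\mathrm{rank}(F_{G^{(1)}})=m-2$. A direct computation shows $\rho(F_{G})=F_{G^{(1)}}$ and $\ker(\rho|_{F_{G}})=F_{G}\cap\mathbb{C}e_{m}$, whence $\mathrm{rank}(F_{G})=(m-2)+\dim(F_{G}\cap\mathbb{C}e_{m})$. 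Since the upper bound $\mathrm{rank}(F_{G})\le m-1$ is already known, the entire problem collapses to showing $e_{m}\in F_{G}$, equivalently $F_{G}\cap\mathbb{C}e_{m}\neq\{0\}$.

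The key step is to prove $e_{m}\in F_{G}$ by contradiction. One first checks, by commutativity exactly as in Lemma \ref{l:17}, that $F_{G}$ is itself $G$-invariant, and that $F_{G}\subseteq\mathrm{vect}(e_{2},\dots,e_{m})$. Assume $e_{m}\notin F_{G}$. Then $\bar e_{1}$ and $\bar e_{m}$ are linearly independent in the two-dimensional quotient $Q=\mathbb{C}^{m}/F_{G}$, hence a basis. Because $Be_{1}-\mu_{B}e_{1}\in F_{G}$ and $Be_{m}=\mu_{B}e_{m}$, every $B\in G$ acts on $Q$ as the scalar $\mu_{B}$. Letting $q\colon\mathbb{C}^{m}\to Q$ be the quotient map, $q$ intertwines $G$ with the scalar semigroup $\{\mu_{B}I_{2}\}$, so $Q=q(J_{G}(u))\subseteq J_{\bar G}(q(u))$. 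Now $q(u)\neq0$, because $u_{1}\neq0$ forces $u\notin\mathrm{vect}(e_{2},\dots,e_{m})\supseteq F_{G}$. But a scalar semigroup moves $q(u)$ only within the line $\mathbb{C}q(u)$, and since $\|x_{m}\|\to\|q(u)\|>0$ no unbounded scalar can produce a finite limit; thus $J_{\bar G}(q(u))\subseteq\mathbb{C}q(u)$, a proper subset of $Q$. This contradiction gives $e_{m}\in F_{G}$ and closes the induction.

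I expect the main obstacle to be the tempting but invalid orbit argument: because the extended limit set perturbs $u$ to nearby points $x_{m}$ that need not lie in the invariant subspace $H_{u}$, one cannot simply invoke $G(u)\subseteq H_{u}$ to confine $J_{G}(u)$. The device that circumvents this is the passage to the scalar quotient $Q$, whose only input is $q(u)\neq0$; this is robust under perturbation and converts the hypothetical deficiency $e_{m}\notin F_{G}$ into impossible scalar dynamics on $\mathbb{C}^{2}$, which is where the hypothesis $u\in U$ does its real work.
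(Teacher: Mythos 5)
Your proof is correct, and its second half takes a genuinely different route from the paper's. The block-reduction step is exactly the paper's. For the one-block case, however, the paper gives a direct, induction-free argument: assuming some $y\notin H_{u}:=\mathbb{C}u+F_{G}$, it takes witnesses $x_{m}\to u$, $B_{m}x_{m}\to y$, and applies Lemma \ref{l:17} to each \emph{perturbed} point: $H_{x_{m}}=\mathbb{C}x_{m}+F_{G}$ is $G$-invariant, so $B_{m}x_{m}=\alpha_{m}x_{m}+z_{m}$ with $z_{m}\in F_{G}$; the bounded/unbounded dichotomy on $\alpha_{m}$ then gives a contradiction in either case (bounded yields $y\in H_{u}$, unbounded yields $u\in F_{G}$), whence $H_{u}=\mathbb{C}^{n}$ and $\mathrm{rank}(F_{G})=n-1$. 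You instead induct on the block size, reduce via rank--nullity to the single claim $e_{m}\in F_{G}$, and run the dichotomy in the quotient $Q=\mathbb{C}^{m}/F_{G}$. The analytic core is the same in both proofs --- the paper's decomposition $B_{m}x_{m}=\alpha_{m}x_{m}+z_{m}$ is precisely your statement $\overline{B}_{m}\,q(x_{m})=\alpha_{m}\,q(x_{m})$ read in the quotient --- so this is the same mechanism in different packaging: the paper avoids quotients by working with the family of invariant subspaces $H_{x}$, while your quotient makes completely explicit why perturbing $u$ to $x_{m}$ causes no harm. One remark: your induction is actually dispensable. Since $(B-\mu_{B}I_{m})e_{i}\in F_{G}$ for every $i\leq m-1$ and $(B-\mu_{B}I_{m})e_{m}=0$, every $B\in G$ acts on $Q$ as the scalar $\mu_{B}$ regardless of $\dim Q$ and without needing the basis $(\overline{e}_{1},\overline{e}_{m})$; your scalar-dynamics dichotomy then gives $Q=J_{\overline{G}}(q(u))\subseteq\mathbb{C}q(u)$ directly, i.e.\ $\dim Q\leq 1$, i.e.\ $\mathrm{rank}(F_{G})\geq m-1$, with no inductive hypothesis. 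That streamlined version of your argument is essentially the paper's proof rewritten in quotient language.
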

\bigskip

\begin{proof} Write $u=[u_{1},\dots,u_{r}]^{T}$, $u_{k}\in \mathbb{C}^{*}\times\mathbb{C}^{n_{k}-1}$,
$A_{j} = \mathrm{diag}(A_{j,1},\dots, A_{j,r})$,  $A_{j,k}\in \mathbb{T}_{n_{k}}$, $j=1,\dots, p$; $k=1,\dots, r$.
First, we will show that 
\
\\
$J_{G_{k}}(u_{k})=\mathbb{C}^{n_{k}}$. For this, let $x_{k}\in \mathbb{C}^{n_{k}}$
 and  $y= [y_{1},\dots, y_{r}]^{T}\in \mathbb{C}^{n}$ such that
 $y_{i}= 0\in \mathbb{C}^{n_{i}}$ if $i\neq k$ and $y_{k}=x_{k}$. As $J_{G}(u) = \mathbb{C}^{n}$, there exist two sequences $(x_{m})_{m}\subset \mathbb{C}^{n}$
  and $(B_{m})_{m}\subset G$ such that $$\underset{m\to +\infty}\lim x_{m}=u\ \ \  \mathrm{and} \ \ \ \underset{m\to +\infty}\lim B_{m}x_{m}
=y.\ \ \ \ \ \ \ (1)$$
\
\\
   Write $x_{m}=[x_{m,1},\dots, x_{m,r}]^{T}$,
  $x_{m,k}\in \mathbb{C}^{n_{k}}$ and $B_{m} = \mathrm{diag}(B_{m,1},\dots,B_{m,r})$, $B_{m,k}\in \mathbb{T}_{n_{k}}(\mathbb{C})$,
  $k=1,\dots, r$. By (1), we have $$\underset{m\to +\infty}\lim x_{m,k}=u_{k}\ \ \ \ \mathrm{and} \ \ \ \underset{m\to +\infty}\lim B_{m,k}x_{m,k}=y_{k}=x_{k}.$$
  Therefore  $x_{k}\in J_{G_{k}}(u_{k})$. It follows that $J_{G_{k}}(u_{k})= \mathbb{C}^{n_{k}}$.
\
\\
Second, one can then suppose that $G\subset \mathbb{T}_{n}(\mathbb{C})$ and $u\in\mathbb{C}^{*}\times\mathbb{C}^{n-1}$. It is clear that $u\notin F_{G}$.
By Lemma \ref{l:17}, $H_{u} = \mathbb{C}u + F_{G}$ is $G$-invariant. Assume that $\mathbb{C}^{n}\backslash H_{u}\neq \emptyset$,
 so let $y\in\mathbb{C}^{n}\backslash H_{u}$.
Then there exist two sequences $(x_{m})_{m}\subset \mathbb{C}^{n}$ and $(B_{m})_{m}\subset G$ such that $\underset{m\to +\infty}\lim x_{m}=u$
and $\underset{m\to +\infty}\lim B_{m}x_{m}=y$. Let
   $H_{x_{m}} = \mathbb{C}x_{m} + F_{G}$ for every $m\in \mathbb{N}$. By Lemma \ref{l:17}, $H_{x_{m}}$ is $G$-invariant, so $B_{m}x_{m}\in H_{x_{m}}$, for
   every $m\in\mathbb{N}$. Write $B_{m}x_{m}=\alpha_{m}x_{m}+z_{m}$, $\alpha_{m}\in\mathbb{C}$ and $z_{m}\in F_{G}$. We distinguish two cases:
\
\\
$\bullet$ If $(\alpha_{m})_{m}$ is bounded, one can suppose by passing to a subsequence, that $(\alpha_{m})_{m\geq 1}$ is convergent,
say $\underset{m\to +\infty}\lim \alpha_{m} =a\in\mathbb{C}$. It follows that
 $\underset{m\to +\infty}\lim z_{m}= y-au\in  F_{G}$ and so $y\in H_{u}$, a contradiction.\
 \\
 \\
   $\bullet$ If $(\alpha_{m})_{m}$ is not bounded, one can suppose by passing to a subsequence, that $\underset{m\to +\infty}\lim|\alpha_{m}| =
+\infty$, then $\underset{m\to +\infty}\lim \frac{1}{\alpha_{m}}z_{m} = -u\in F_{G}$, a contradiction.\
We conclude that $H_{u} = \mathbb{C}^{n}$ and so dim$(F_{G}) = n-1$.
\end{proof}
\bigskip

\section{\bf Proof of Theorem \ref{t:1}, Corollaries \ref{c:1} and \ref{c:2}}
\medskip

Let recall the following results.

\begin{prop}\label{p:2} $($\cite{aAhM06}, Proposition 5.1$)$ Let  $G$ be an abelian sub-semigroup of $\mathbb{T}_{n}(\mathbb{C})$, $n\geq 1$. If
$\mathrm{rank}(F_{G}) = n-1$, then there exists an injective
linear map $\varphi: \mathbb{C}^{n } \ \longrightarrow \
\ \mathbb{T}_{n}(\mathbb{C})$ such that:
\begin{itemize}
 \item [(i)] for every $v\in\mathbb{C}^{n}$,  $\varphi(v)e_{1} = v$
\item [(ii)] $G\subset\varphi(\mathbb{C}^{n})$.
\end{itemize}
  \end{prop}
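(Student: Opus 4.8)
The plan is to realize $\varphi$ as the inverse of an evaluation map. Since $\mathbb{T}_n(\mathbb{C})$ is closed under sums and products (a product of two lower triangular matrices with constant diagonals is again lower triangular with constant diagonal), it is an associative unital subalgebra of $M_n(\mathbb{C})$; let $\A$ denote the subalgebra generated by $G$ and $I_n$. As $G$ is abelian, $\A$ is a \emph{commutative} subalgebra of $\mathbb{T}_n(\mathbb{C})$. I would then study the linear evaluation map $\mathrm{ev}\colon \A \to \mathbb{C}^n$, $A \mapsto Ae_1$, and show it is a bijection. Setting $\varphi := \mathrm{ev}^{-1}$ immediately yields an injective linear map with $\varphi(v)e_1 = v$ for all $v$ and with $\varphi(\mathbb{C}^n) = \A \supseteq G$, which is exactly (i) and (ii).

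The injectivity of $\mathrm{ev}$ becomes cheap once surjectivity is known, and this is where commutativity enters: if $A \in \A$ satisfies $Ae_1 = 0$, then for every $A' \in \A$ we have $A(A'e_1) = A'(Ae_1) = 0$, so $A$ annihilates $\A e_1$; if $\A e_1 = \mathbb{C}^n$ this forces $A = 0$. Thus the whole proposition reduces to the single cyclicity statement $\A e_1 = \mathbb{C}^n$. Before attacking it I would record the hypothesis in usable form: writing $N_B := B - \mu_B I_n$ (strictly lower triangular, so $N_B e_n = 0$), one has $F_G = \sum_{B\in G}\mathrm{Im}(N_B) \subseteq \langle e_2,\dots,e_n\rangle$, and since this subspace has dimension $n-1$, the hypothesis $\mathrm{rank}(F_G) = n-1$ is equivalent to $F_G = \langle e_2,\dots,e_n\rangle$; in particular $e_n \in \sum_B \mathrm{Im}(N_B)$.

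I would prove $\A e_1 = \mathbb{C}^n$ by induction on $n$, the case $n=1$ being trivial. Using the block decomposition $B = \bigl(\begin{smallmatrix} B^{(1)} & 0 \\ L_B & \mu_B\end{smallmatrix}\bigr)$ from the Preliminaries and the projection $P$ onto the first $n-1$ coordinates, one checks $P(F_G) = F_{G^{(1)}}$, whence $\mathrm{rank}(F_{G^{(1)}}) \ge \mathrm{rank}(F_G) - 1 = n-2$, forcing equality. The induction hypothesis then gives $\A^{(1)} e_1 = \mathbb{C}^{n-1}$, where $\A^{(1)}$ is the commutative algebra generated by $G^{(1)}$. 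Since products in $\A$ are block lower triangular with top block the corresponding product in $\A^{(1)}$, we obtain $P(\A e_1) = \A^{(1)} e_1 = \mathbb{C}^{n-1}$, so $V_0 := \A e_1$ has dimension $n-1$ or $n$.

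The hard part is excluding $\dim V_0 = n-1$. In that case $e_n \notin V_0$ and $P|_{V_0}$ is a bijection, so $V_0$ is the graph of a linear functional $t = (t_1,\dots,t_{n-1})$ on the first $n-1$ coordinates (with $t_1 = 0$, since $e_1 \in V_0$). Feeding a general element of $V_0$ through the $\A$-invariance $N_B V_0 \subseteq V_0$ and reading off the last coordinate forces $L_B = t\,N_B^{(1)}$ for every $B \in G$. But then, writing $e_n = \sum_B N_B w_B$ (possible because $e_n \in F_G$) and taking last coordinates gives $1 = \sum_B L_B w_B^{(1)} = t\bigl(\sum_B N_B^{(1)} w_B^{(1)}\bigr) = t\cdot 0 = 0$, since the first $n-1$ coordinates of $e_n$ vanish. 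This contradiction yields $V_0 = \mathbb{C}^n$ and closes the induction. I expect this final step — converting the codimension-one invariant complement into the relation $L_B = t\,N_B^{(1)}$ and colliding it with $e_n \in F_G$ — to be the crux; everything else is routine linear algebra and bookkeeping with the block form.
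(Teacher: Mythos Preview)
Your argument is correct. Note, however, that the paper does \emph{not} supply its own proof of this proposition: it is quoted verbatim from \cite{aAhM06}, Proposition~5.1, and only the structural consequence recorded in Corollary~\ref{C:1} is extracted from that external proof. So there is no in-paper argument to compare against line by line.

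That said, Corollary~\ref{C:1} reveals the shape of the argument in \cite{aAhM06}: $\varphi$ is built inductively in block form from the map $\varphi^{(1)}$ associated to $G^{(1)}$, together with an auxiliary linear map $\eta\colon\mathbb{C}^{n-1}\to\mathbb{C}^{n-2}$ filling in the last row. Your route is different in spirit: you define $\varphi$ globally as the inverse of the evaluation $A\mapsto Ae_1$ on the commutative algebra $\mathcal{A}$ generated by $G$, and reduce everything to the single cyclicity statement $\mathcal{A}e_1=\mathbb{C}^n$. This is cleaner conceptually---injectivity drops out for free from commutativity and surjectivity, and the image $\varphi(\mathbb{C}^n)=\mathcal{A}$ is identified intrinsically---whereas the block-inductive construction makes the explicit form of $\varphi$ (needed later in Lemma~\ref{L:12}) immediately visible. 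Your induction for cyclicity still passes through $G^{(1)}$ and $\mathrm{rank}(F_{G^{(1)}})=n-2$, so the two approaches share the same inductive skeleton; the genuine novelty in your write-up is the endgame, where you encode a hypothetical codimension-one $\mathcal{A}$-invariant complement $V_0$ as the graph of a functional $t$, derive $L_B = t\,N_B^{(1)}$ from invariance, and collide this with $e_n\in F_G$ to get $1=0$. That step is tight and correct.
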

\
\\
 The map $\varphi$ in Proposition \ref{p:2} can be precise, from the proof of (\cite{aAhM06}, Proposition 5.1), as follows:
\medskip

\begin{cor}\label{C:1} Under the hypothesis of Proposition ~\ref{p:2}, and for $n\geq 2$, there exists a linear map
 $\eta:\mathbb{C}^{n-1}\longrightarrow\mathbb{C}^{n-2}$ such that for every
\
\\
$v= [v_{1},\dots,v_{n}]^{T}\in\mathbb{C}^{n}$,
  $\varphi(v)=\left[\begin{array}{cc}
                       \varphi^{(1)}(v^{(1)}) & 0 \\
                       \left[v_{n}, (\eta(v^{(1)}))^{T}\right]^{T} & v_{1}
                     \end{array}
\right]$, with
\
\\
$v^{(1)} =
[v_{1},\dots,v_{n-1}]^{T}\in\mathbb{C}^{n}$ and $\varphi^{(1)}: \mathbb{C}^{n-1}\longrightarrow
 \mathbb{T}_{n-1}(\mathbb{C})$ is the injective linear map associated to $G^{(1)}$ given by
Proposition ~\ref{p:2}.
\end{cor}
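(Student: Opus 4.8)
The plan is to read off the block decomposition directly from the inductive construction of $\varphi$ carried out in the proof of Proposition~\ref{p:2} (\cite{aAhM06}, Proposition 5.1), organizing everything around the splitting $\mathbb{C}^{n}=\mathbb{C}^{n-1}\times\mathbb{C}$, $v=[v^{(1)},v_{n}]^{T}$. First I would record the shape that property (i) forces on $\varphi$. Since $\varphi(v)\in\mathbb{T}_{n}(\mathbb{C})$ is lower triangular with constant diagonal, its diagonal value equals its $(1,1)$-entry, and $\varphi(v)e_{1}=v$ gives that this value is $v_{1}$. Decomposing $\varphi(v)$ along the last row and column as in the Preliminaries, write
$$\varphi(v)=\left[\begin{array}{cc}\Phi(v)&0\\ \ell(v)&v_{1}\end{array}\right],\qquad \Phi(v)\in\mathbb{T}_{n-1}(\mathbb{C}),\ \ \ell(v)\in M_{1,n-1}(\mathbb{C}).$$
Linearity of $\varphi$ makes $v\mapsto\Phi(v)$ and $v\mapsto\ell(v)$ linear, and since the first column of $\varphi(v)$ equals $v$, the first $n-1$ entries of that column give $\Phi(v)e^{(1)}_{1}=v^{(1)}$, while its last entry gives that the first coordinate of $\ell(v)$ is $v_{n}$. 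Thus the claim reduces to the two assertions $\Phi(v)=\varphi^{(1)}(v^{(1)})$ and $\ell(v)=[v_{n},(\eta(v^{(1)}))^{T}]$ for a suitable linear $\eta$.

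Second, I would check that Proposition~\ref{p:2} is applicable to the compression $G^{(1)}=\{B^{(1)}:B\in G\}$, an abelian sub-semigroup of $\mathbb{T}_{n-1}(\mathbb{C})$, so that $\varphi^{(1)}$ exists. The point is the rank identity $\mathrm{rank}(F_{G^{(1)}})=n-2$. Here $F_{G^{(1)}}$ coincides with $F_{G}^{(1)}$ by their very definitions (take $C=B^{(1)}$, with common diagonal $\mu_{B}$). Because every generator $(B-\mu_{B}I_{n})e_{i}$ of $F_{G}$ is a strictly lower triangular column, $F_{G}\subseteq\mathrm{vect}(e_{2},\dots,e_{n})$, so the hypothesis $\mathrm{rank}(F_{G})=n-1$ forces $F_{G}=\mathrm{vect}(e_{2},\dots,e_{n})$. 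Applying the projection $\pi:x\mapsto x^{(1)}$ onto the first $n-1$ coordinates, and using $\pi((B-\mu_{B}I_{n})e_{i})=(B^{(1)}-\mu_{B}I_{n-1})e^{(1)}_{i}$ for $i\le n-2$ together with $\pi((B-\mu_{B}I_{n})e_{n-1})=0$, I would get $\pi(F_{G})=F_{G}^{(1)}$; but $\pi(F_{G})=\mathrm{vect}(e^{(1)}_{2},\dots,e^{(1)}_{n-1})$ has dimension $n-2$, which yields the identity. With this in hand, the inductive nature of the construction gives $\Phi(v)=\varphi^{(1)}(v^{(1)})$: the top-left $(n-1)\times(n-1)$ block of the matrix produced for $G$ is precisely the matrix produced for $G^{(1)}$ from $v^{(1)}$.

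Finally, I would pin down the last row. By the construction the extra basis direction is sent to the matrix unit $E_{n,1}$, i.e.\ $\varphi(e_{n})$ has zero top-left block and last row $[1,0,\dots,0]$; in particular $\Phi(e_{n})=0$, confirming that $\Phi(v)$ depends only on $v^{(1)}$. Then by linearity $\ell(v)=v_{n}\,[1,0,\dots,0]+\ell([v^{(1)},0]^{T})$, and since the first coordinate of $\ell([v^{(1)},0]^{T})$ is the last entry of the first column of $\varphi([v^{(1)},0]^{T})$, namely $0$, I may set $\eta(v^{(1)})$ to be the vector of the remaining $n-2$ coordinates of $\ell([v^{(1)},0]^{T})$; this $\eta:\mathbb{C}^{n-1}\to\mathbb{C}^{n-2}$ is linear and gives $\ell(v)=[v_{n},(\eta(v^{(1)}))^{T}]$, completing the proof. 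The main obstacle is precisely the pair of facts $\Phi(v)=\varphi^{(1)}(v^{(1)})$ and $\varphi(e_{n})=E_{n,1}$: these are the two features that must be extracted from the explicit recursive definition of $\varphi$ in \cite{aAhM06}, since the abstract properties (i)--(ii) alone need not determine $\varphi$ uniquely.
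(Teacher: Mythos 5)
Your proposal is correct and takes essentially the same approach as the paper: the paper offers no self-contained proof of this corollary, asserting only that the block form is read off from the recursive construction of $\varphi$ in the proof of Proposition 5.1 of \cite{aAhM06}, which is precisely what you do, deferring the two key structural facts (that the upper-left block of $\varphi(v)$ is $\varphi^{(1)}(v^{(1)})$ and that $\varphi(e_{n})$ is the matrix unit with a single $1$ in position $(n,1)$) to that cited construction. Your supplementary verifications---the identity $\mathrm{rank}(F_{G^{(1)}})=n-2$ guaranteeing that $\varphi^{(1)}$ is defined, and the linear-algebra reduction producing the map $\eta$---are correct and make explicit what the paper leaves implicit.
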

\medskip

\begin{lem}\label{L:12} Let  $G$ be an abelian sub-semigroup of $\mathbb{T}_{n}(\mathbb{C})$, $n\geq 1$. Suppose that rank$(F_{G}) = n-1$.
Let $u, \ v\in \mathbb{C}^{*}\times \mathbb{C}^{n-1}$, $(u_{m})_{m\in\mathbb{N}}$ in
$\mathbb{C}^{*}\times \mathbb{C}^{n-1}$ and $(B_{m})_{m\in\mathbb{N}}$ in $G$ such that $\underset{m\to+\infty}\lim u_{m} = u$ and
$\underset{m\to+\infty}\lim B_{m}u_{m} = v$. If $(B^{(1)}_{m})_{m\in \mathbb{N}}$ is bounded then
$(B_{m})_{m\in \mathbb{N}}$ is bounded.
\end{lem}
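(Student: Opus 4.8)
The plan is to avoid controlling the $n^2$ entries of $B_m$ directly and instead exploit the rigid structure of $G$ coming from the representation $\varphi$. Since $\mathrm{rank}(F_G)=n-1$, Proposition~\ref{p:2} supplies an injective linear map $\varphi:\mathbb{C}^n\to\mathbb{T}_n(\mathbb{C})$ with $\varphi(v)e_1=v$ and $G\subset\varphi(\mathbb{C}^n)$. Setting $w_m:=B_m e_1$ (the first column of $B_m$), injectivity gives $B_m=\varphi(w_m)$; and because $\varphi$ is an injective linear map between finite-dimensional spaces, its inverse on $\varphi(\mathbb{C}^n)$ is continuous. Hence $(B_m)_m$ is bounded \emph{if and only if} $(w_m)_m$ is bounded. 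This reduction is what I regard as the conceptual heart of the proof, and the main obstacle: a naive approach would only extract from $B_mu_m\to v$ the single scalar bound on $L_{B_m}u_m^{(1)}$, which is one linear constraint on the $(n-1)$-dimensional row $L_{B_m}$ and is far from enough. The point of passing through $\varphi$ is precisely that, once $B_m^{(1)}$ is fixed, the row $L_{B_m}$ has only one genuinely free coordinate.

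Next I would read off the coordinates of $w_m=[w_m^{(1)},w_{m,n}]^T$ from the explicit form in Corollary~\ref{C:1}: the diagonal entry is $\mu_{B_m}=w_{m,1}$, the upper block is $B_m^{(1)}=\varphi^{(1)}(w_m^{(1)})$, and the bottom row is $L_{B_m}=[\,w_{m,n},\,(\eta(w_m^{(1)}))^T\,]$, where $\varphi^{(1)}$ is the injective linear map attached to $G^{(1)}$ and $\eta:\mathbb{C}^{n-1}\to\mathbb{C}^{n-2}$ is linear. Applying the same injective-linear reasoning to $\varphi^{(1)}$, the hypothesis that $(B_m^{(1)})_m$ is bounded forces $(w_m^{(1)})_m$ to be bounded. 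Consequently $\mu_{B_m}=w_{m,1}$ is bounded, and since $\eta$ is linear, every entry of $L_{B_m}$ except the first is bounded as well. Thus the only coordinate of $w_m$ still to be controlled is $w_{m,n}=(L_{B_m})_1$.

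For this last coordinate I would use the data $u_m\to u$ and $B_m u_m\to v$, looking only at the final coordinate: $(B_m u_m)_n = L_{B_m}u_m^{(1)}+\mu_{B_m}u_{m,n}$, which converges to $v_n$ and is in particular bounded. Expanding $L_{B_m}u_m^{(1)}=w_{m,n}\,u_{m,1}+\sum_{i=2}^{n-1}(L_{B_m})_i\,u_{m,i}$, every term on the right except $w_{m,n}u_{m,1}$ is bounded (the coordinates $(L_{B_m})_i$ for $i\ge2$ are bounded by the previous paragraph, and the $u_{m,i}$ and $\mu_{B_m}u_{m,n}$ are bounded as convergent sequences). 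Subtracting the bounded terms shows $w_{m,n}u_{m,1}$ is bounded. Finally, since $u\in\mathbb{C}^{*}\times\mathbb{C}^{n-1}$ we have $u_{m,1}\to u_1\neq0$, so $|u_{m,1}|\ge |u_1|/2>0$ for all large $m$, and therefore $(w_{m,n})_m$ is bounded.

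Combining the three steps, $w_m=[w_m^{(1)},w_{m,n}]^T$ is bounded, and by the continuity of $\varphi^{-1}$ established at the outset, $B_m=\varphi(w_m)$ is bounded, which is the assertion. (The degenerate case $n=1$ is immediate, since then $B_m=\mu_{B_m}$ and $\mu_{B_m}=(B_mu_m)/u_m\to v/u$.) I expect the only delicate points to be justifying continuity of $\varphi^{-1}$ and $(\varphi^{(1)})^{-1}$ on their ranges and the bookkeeping identifying which entries of $L_{B_m}$ are governed by $B_m^{(1)}$ versus the single free coordinate $w_{m,n}$.
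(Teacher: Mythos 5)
Your proof is correct and follows essentially the same route as the paper's: parametrize $B_m=\varphi(w_m)$ via Proposition \ref{p:2} and Corollary \ref{C:1}, deduce boundedness of $w_m^{(1)}$ (hence of $\mu_{B_m}=w_{m,1}$ and of $\eta(w_m^{(1)})$) from boundedness of $B_m^{(1)}$, and then control the one remaining entry $w_{m,n}$ using the last coordinate of $B_mu_m$ together with $u_{m,1}\to u_1\neq 0$. If anything, your handling of that final step is more careful than the paper's, which asserts $\lim_{m\to+\infty} z_{m,n}u_{m,1}=v_1$ (a slip: the last coordinate of $B_mu_m$ only yields boundedness of $z_{m,n}u_{m,1}$ after subtracting the bounded terms, and the relevant limit would involve $v_n$, not $v_1$); your boundedness argument is exactly the correct repair.
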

\medskip

\begin{proof} By Proposition \ref{p:2}, there exists an injective linear map
\
\\
$\varphi: \mathbb{C}^{n } \ \longrightarrow \
\ \mathbb{T}_{n}(\mathbb{C})$ such that $G\subset\varphi(\mathbb{C}^{n})$ and $\varphi(w)e_{1} = w$ for every $w\in \mathbb{C}^{n}$.
\
\\

Write $B_{m}=\varphi(w_{m})$, $w_{m}=[z_{m,1},\dots,z_{m,n}]^{T}\in\mathbb{C}^{n}$.
By Corollary ~\ref{C:1}, we have $$B_{m}=\varphi(w_{m})=\left[\begin{array}{cc}
\varphi^{(1)}(w^{(1)}_{m}) & 0 \\
\left[z_{m,n}, \ (\eta(w^{(1)}_{m}))^{T}\right]^{T} & z_{m,1}
\end{array}\right],$$
where $\varphi^{(1)}$ is the injective linear map associated to $G^{(1)}$
and
\
\\
$\eta: \mathbb{C}^{n-1}\longrightarrow \mathbb{C}^{n-2}$ is a linear map. Then $w^{(1)}_{m} = \varphi^{(1)}(w^{(1)}_{m})u_{0}^{(1)} =
B^{(1)}_{m}u_{0}^{(1)}$ and thus $(w^{(1)}_{m})_{m\geq 1}$ is bounded. So $(\eta(w^{(1)}_{m}))_{m\geq 1}$ and $(z_{m,1})_{m\geq 1}$ are also
bounded. Write $u_{m}=[u_{m,1},\dots,u_{m,n}]^{T}\in \mathbb{C}^{*}\times \mathbb{C}^{n-1}$. Then
we have $\underset{m\to+\infty}\lim u_{m,1}=u_{1}\neq 0$ and $\underset{m\to+\infty}\lim z_{m,n}u_{m,1} = v_{1}\neq 0$.
Thus  $\underset{m\to+\infty}\lim z_{m,n} = \dfrac{v_{1}}{u_{1}}$ and so $(z_{m,n})_{m\geq 1}$ is bounded. We deduce that $(B_{m})_{m\in \mathbb{N}}$ is bounded, this completes the proof.
\end{proof}
\bigskip

\begin{lem}\label{L:11} Let $G$ be an abelian sub-semigroup of  $\mathbb{T}_{n}(\mathbb{C})$ such that

$\textrm{rank}(F_{G})= n-1$. Let $u, \ v\in \mathbb{C}^{*}\times \mathbb{C}^{n-1}$. If two sequences $(u_{m})_{m\in\mathbb{N}}$ in
$\mathbb{C}^{*}\times \mathbb{C}^{n-1}$ and $(B_{m})_{m\in\mathbb{N}}$ in $G$ such that $\underset{m\to+\infty}\lim u_{m} = u$ and
$\underset{m\to+\infty}\lim B_{m}u_{m} = v$ then $(B_{m})_{m\in\mathbb{N}}$ is bounded.
\end{lem}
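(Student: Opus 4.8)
The plan is to prove the statement by induction on $n$, using the already-established Lemma \ref{L:12} to close the inductive step. Note first that Lemma \ref{L:11} differs from Lemma \ref{L:12} only by the removal of the hypothesis that $(B^{(1)}_{m})_{m}$ is bounded; hence it suffices to derive that boundedness from the standing hypotheses, and the natural route is to recognize it as the very same statement one dimension lower.

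For the base case $n=1$, the semigroup $G$ is contained in $\mathbb{T}_{1}(\mathbb{C})=\mathbb{C}$, so each $B_{m}$ is a scalar. Since $u_{m}\to u\neq 0$ and $B_{m}u_{m}\to v$, we get $B_{m}=(B_{m}u_{m})/u_{m}\to v/u$, and therefore $(B_{m})_{m}$ is bounded.

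For the inductive step ($n\geq 2$), I would pass to the truncated semigroup $G^{(1)}=\{B^{(1)}:\ B\in G\}\subset \mathbb{T}_{n-1}(\mathbb{C})$. Writing each vector as $x=[x^{(1)},x_{n}]^{T}$ and each $B_{m}$ in the block form introduced at the start of Section 2, the first $n-1$ coordinates of $B_{m}u_{m}$ are exactly $B^{(1)}_{m}u^{(1)}_{m}$. Thus $u_{m}\to u$ and $B_{m}u_{m}\to v$ yield $u^{(1)}_{m}\to u^{(1)}$ and $B^{(1)}_{m}u^{(1)}_{m}\to v^{(1)}$; and since the first coordinates of $u_{m}$, $u$ and $v$ are nonzero, so are those of $u^{(1)}_{m}$, $u^{(1)}$ and $v^{(1)}$, placing all three in $\mathbb{C}^{*}\times\mathbb{C}^{n-2}$. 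Moreover $\mathrm{rank}(F_{G^{(1)}})=n-2$, which is precisely the hypothesis under which Corollary \ref{C:1} attaches to $G^{(1)}$ the injective map $\varphi^{(1)}$ via Proposition \ref{p:2}, and which follows from $\mathrm{rank}(F_{G})=n-1$. The induction hypothesis then applies to $G^{(1)}$ with the data $(u^{(1)}_{m},B^{(1)}_{m},u^{(1)},v^{(1)})$ and gives that $(B^{(1)}_{m})_{m}$ is bounded. With this boundedness in hand, the hypotheses of Lemma \ref{L:12} hold verbatim, and that lemma delivers the boundedness of $(B_{m})_{m}$, completing the induction.

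The step I expect to be the crux is the transfer of the rank condition: that $\mathrm{rank}(F_{G})=n-1$ forces $\mathrm{rank}(F_{G^{(1)}})=n-2$, without which the induction hypothesis cannot be invoked on $G^{(1)}$. This is exactly the fact secured by the construction behind Corollary \ref{C:1}, where $\varphi^{(1)}$ is the injective map attached to $G^{(1)}$ by Proposition \ref{p:2}, whose very existence presupposes that rank equality.
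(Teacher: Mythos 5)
Your proof is correct and is essentially the paper's own argument: induction on $n$ with the same scalar base case, passage to the truncated semigroup $G^{(1)}$ (noting the first coordinates stay nonzero) together with the rank transfer $\mathrm{rank}(F_{G^{(1)}})=n-2$, and Lemma \ref{L:12} to close the inductive step. The only cosmetic difference is that you justify the rank transfer by appeal to Corollary \ref{C:1} while the paper asserts it outright; the cleanest direct justification is that $\mathrm{rank}(F_{G})=n-1$ forces $F_{G}=\{0\}\times\mathbb{C}^{n-1}$, whose projection onto the first $n-1$ coordinates is exactly $F_{G^{(1)}}=\{0\}\times\mathbb{C}^{n-2}$.
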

\bigskip

\begin{proof} The proof is done by induction on $n$. For $n=1$, we have
\
\\
$u_{m}\in \mathbb{C}^{*}$, $B_{m} = \lambda_{m}\in\mathbb{C}$ and $u, \ v\in\mathbb{C}^{*}$.
 The conditions $\underset{m\to+\infty}\lim u_{m} = u$ and $\underset{m\to+\infty}\lim B_{m}u_{m} = v$
show that  $\underset{m\to+\infty}\lim B_{m} = \dfrac{v}{u}$, hence $(B_{m})_{m\in\mathbb{N}}$ is bounded.
Suppose the Lemma is true up to dimension $n-1$ and let $G$ be an abelian sub-semigroup of $\mathbb{T}_{n}(\mathbb{C})$.
Let $u, \ v\in \mathbb{C}^{*}\times \mathbb{C}^{n-1}$, $(u_{m})_{m\in\mathbb{N}}$ a sequence in $\mathbb{C}^{*}\times \mathbb{C}^{n-1}$ and
$(B_{m})_{m\in\mathbb{N}}$ a sequence in $G$ such that $\underset{m\to+\infty}\lim u_{m}=u$ and

$\underset{m\to+\infty}\lim B_{m}u_{m}=v$.
 We let
 $u = [u_{1},\dots,u_{n}]^{T}, \ v= [v_{1},\dots,v_{n}]^{T}$ and

$u_{m} = [u_{m,1},\dots,u_{m,n}]^{T}\in\mathbb{C}^{n}$. We have
$\underset{m\to+\infty}\lim u^{(1)}_{m} = u^{(1)}$ and
\
\\
$\underset{m\to+\infty}\lim B^{(1)}_{m}u^{(1)}_{m} = v^{(1)}$. The set $G^{(1)}$ is an abelian sub-semigroup of $\mathbb{T}_{n-1}(\mathbb{C})$ and $\textrm{rank}(F_{G^{(1)}})= n-2$. By induction hypothesis
applied to $G^{(1)}$ on $\mathbb{C}^{n-1}$, the sequence $(B^{(1)}_{m})_{m\in\mathbb{N}}$ is bounded.
Therefore, by Lemma \ref{L:12}, $(B_{m})_{m}$ is bounded.
\end{proof}
\bigskip

\begin{cor}\label{c:36} Let $G$ be an abelian sub-semigroup of $\mathcal{K}_{\eta,r}(\mathbb{C})$ generated by $A_{1},\dots,A_{p}$, $p\geq 1$.
Suppose that
$\mathrm{rank}(F_{G_{k}}) = n_{k}-1$, $k=1,\dots, r$. If $x, \ y\in U$ and two sequences
$(B_{m})_{m}\subset G$ and $(x_{m})_{m}\subset \mathbb{C}^{m}$ such that $\underset{m\to+\infty}\lim x_{m}=x$ and $\underset{m\to+\infty}\lim B_{m}x_{m}=y$ then $(B_{m})_{m}$ is bounded.
\end{cor}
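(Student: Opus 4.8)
The plan is to reduce the claim to the single-block case already settled in Lemma \ref{L:11} by projecting everything onto the block-diagonal structure of $\mathcal{K}_{\eta,r}(\mathbb{C})$. Since $G\subset\mathcal{K}_{\eta,r}(\mathbb{C})$, each $B_m$ is block diagonal, $B_m=\mathrm{diag}(B_{m,1},\dots,B_{m,r})$ with $B_{m,k}\in\mathbb{T}_{n_k}(\mathbb{C})$ and $B_{m,k}\in G_k$. Writing $x=[x_1,\dots,x_r]^T$, $y=[y_1,\dots,y_r]^T$ and $x_m=[x_{m,1},\dots,x_{m,r}]^T$ with the $k$-th block in $\mathbb{C}^{n_k}$, the hypotheses $x_m\to x$ and $B_mx_m\to y$ pass coordinatewise to each block: $x_{m,k}\to x_k$ and $B_{m,k}x_{m,k}\to y_k$ as $m\to+\infty$, for every $k=1,\dots,r$. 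Because $x,y\in U=\prod_{k=1}^{r}(\mathbb{C}^{*}\times\mathbb{C}^{n_k-1})$, each $x_k$ and each $y_k$ lies in $\mathbb{C}^{*}\times\mathbb{C}^{n_k-1}$.

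First I would fix $k$ and check that the data for the block $G_k$ match the hypotheses of Lemma \ref{L:11}. The set $G_k$ is an abelian sub-semigroup of $\mathbb{T}_{n_k}(\mathbb{C})$, and by assumption $\mathrm{rank}(F_{G_k})=n_k-1$; moreover $x_k,y_k\in\mathbb{C}^{*}\times\mathbb{C}^{n_k-1}$ and $(B_{m,k})_m\subset G_k$. The only point needing care is the requirement that the approximating sequence $(x_{m,k})_m$ itself lie in $\mathbb{C}^{*}\times\mathbb{C}^{n_k-1}$, that is, that its first coordinate be nonzero. This is exactly where the hypothesis $x\in U$ (rather than merely $x\in\mathbb{C}^n$) enters: the first coordinate of $x_k$ is nonzero, so from $x_{m,k}\to x_k$ the first coordinate of $x_{m,k}$ is nonzero for all $m$ large enough. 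Discarding the finitely many initial terms for which this fails does not affect boundedness, so we may assume $x_{m,k}\in\mathbb{C}^{*}\times\mathbb{C}^{n_k-1}$ for all $m$.

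With these verifications, Lemma \ref{L:11} applied to $G_k$ on $\mathbb{C}^{n_k}$ yields that $(B_{m,k})_m$ is bounded, for each $k=1,\dots,r$. Finally, since $B_m=\mathrm{diag}(B_{m,1},\dots,B_{m,r})$ and boundedness of a block-diagonal sequence is equivalent to boundedness of each of its finitely many diagonal blocks, I conclude that $(B_m)_m$ is bounded. The argument is essentially bookkeeping once Lemma \ref{L:11} is available; the only genuine subtlety is the passage to a nonzero first coordinate in each block, which relies on the assumption $x\in U$.
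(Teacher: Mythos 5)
Your proof is correct and follows exactly the route the paper intends: the corollary is stated without proof precisely because it is the blockwise application of Lemma \ref{L:11} to each $G_k$ on $\mathbb{C}^{n_k}$, which is what you carry out. Your attention to the detail that $(x_{m,k})_m$ must eventually lie in $\mathbb{C}^{*}\times\mathbb{C}^{n_k-1}$ before Lemma \ref{L:11} applies is a point the paper glosses over entirely, and it is handled correctly by discarding finitely many terms.
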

\medskip

\begin{prop}\label{p:101}\cite{chm2} Let $G$ be an abelian sub-semigroup of $M_{n}(\mathbb{C})$ generated by $A_{1},\dots,A_{p}$, $p\geq 1$.
Then $G$ is hypercyclic if and only if
 $J_{G}(x)= \mathbb{C}^{n}$ for every $x\in\mathbb{C}^{n}$.
\end{prop}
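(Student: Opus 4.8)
The statement is a biconditional, so the plan is to treat the two implications separately, using the reformulation of ``$J_{G}(x)=\mathbb{C}^{n}$'' as a transitivity condition recorded in the Introduction.

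\textbf{The easy implication.} Assuming $J_{G}(x)=\mathbb{C}^{n}$ for every $x$, I would first extract topological transitivity. Given nonempty open sets $U,V\subseteq\mathbb{C}^{n}$, pick $x\in U$ and a neighborhood $U_{x}\subseteq U$; since any $y\in V$ lies in $J_{G}(x)$, there are $x_{m}\to x$ and words $A_{m}\in G$ (of total index tending to $+\infty$) with $A_{m}x_{m}\to y$, so for large $m$ we have $x_{m}\in U_{x}\subseteq U$ and $A_{m}x_{m}\in V$, i.e. $A_{m}(U)\cap V\neq\emptyset$. Thus $G$ is topologically transitive. As $\mathbb{C}^{n}$ is a Polish space without isolated points and $G$ is a countable family of continuous maps, Birkhoff's transitivity theorem applies: $\{x:\overline{G(x)}=\mathbb{C}^{n}\}=\bigcap_{k}\bigcup_{A\in G}A^{-1}(V_{k})$ (with $(V_{k})_{k}$ a countable base) is a dense $G_{\delta}$, hence nonempty, so $G$ is hypercyclic. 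This direction is routine.

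\textbf{The harder implication: producing dense hypercyclic vectors.} For the converse I would upgrade ``one dense orbit'' to ``a dense set of hypercyclic vectors,'' exploiting linearity together with commutativity. Fix $v$ with $\overline{G(v)}=\mathbb{C}^{n}$. If a generator $A_{j}$ is non-invertible, then $W_{j}:=A_{j}(\mathbb{C}^{n})$ is a proper, hence nowhere dense, subspace, and commutativity lets me pull one copy of $A_{j}$ to the front of any word containing it, so every such word sends $v$ into $W_{j}$. Therefore $\mathbb{C}^{n}=\overline{G(v)}\subseteq\overline{G'(v)}\cup W_{j}$, where $G'$ is generated by the remaining generators; since $W_{j}$ is nowhere dense this forces $\overline{G'(v)}=\mathbb{C}^{n}$. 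Iterating, I may delete all non-invertible generators and conclude that invertible words alone already yield a dense orbit. Hence $D:=\{\sigma v:\sigma\ \text{an invertible word of}\ G\}$ is dense, and each $\sigma v\in D$ is itself hypercyclic, because $\sigma$ is a homeomorphism commuting with $G$, whence $\overline{G(\sigma v)}=\sigma(\overline{G(v)})=\mathbb{C}^{n}$. So the hypercyclic vectors are dense.

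\textbf{Conclusion of the harder implication.} Next I would record a tail-density estimate: for a hypercyclic $z$ and any nonempty open $V$, the orbit meets $V$ along words of arbitrarily large total index. Indeed, if only words of index $<N$ hit $V$, then $G(z)\cap V$ is finite (finitely many words of bounded index), while $V\subseteq\overline{G(z)}$ together with openness of $V$ forces every point of $V$ into that finite set, which is impossible since $V$ is infinite. Finally, given arbitrary $x,y\in\mathbb{C}^{n}$, choose hypercyclic vectors $z_{m}\to x$ from $D$, and for each $m$ a word $w_{m}$ of index $\geq m$ with $\|w_{m}z_{m}-y\|<1/m$ (tail-density applied to $V=B(y,1/m)$). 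Then $z_{m}\to x$, the index of $w_{m}$ tends to $+\infty$, and $w_{m}z_{m}\to y$, so $y\in J_{G}(x)$; as $x,y$ were arbitrary, $J_{G}(x)=\mathbb{C}^{n}$ for every $x$. The main obstacle is precisely the step in the third paragraph: unlike the single-operator case, an abelian semigroup has no inverses, so the familiar ``subtract exponents'' argument fails and a dense orbit does not obviously yield transitivity; the way out is to use the two structural features here --- linearity (non-invertible generators squeeze orbits into nowhere-dense subspaces) and commutativity (a single offending factor can be pulled to the front) --- to show invertible words already suffice. Everything else is standard.
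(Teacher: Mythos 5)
Your proof is correct, but there is nothing in the paper to compare it against: the paper does not prove Proposition \ref{p:101} at all — it imports it from Costakis--Hadjiloucas--Manoussos \cite{chm2}, which is why the citation sits inside the statement. So your blind attempt is, in effect, a self-contained substitute for an external reference, and it holds up under scrutiny. The forward (easy) direction is the standard Birkhoff argument: $J_{G}(x)=\mathbb{C}^{n}$ for all $x$ yields topological transitivity, and since $G$ is countable and $\mathbb{C}^{n}$ is Polish, $\bigcap_{k}\bigcup_{A\in G}A^{-1}(V_{k})$ is a dense $G_{\delta}$ of hypercyclic vectors. The converse is where the real content lies, and you correctly identified and dealt with the two obstructions that make it nontrivial for semigroups rather than single operators: (a) the lack of inverses, which you circumvent by discarding non-invertible generators (commutativity pulls a non-invertible factor $A_{j}$ to the front of any word containing it, confining those orbit points to the nowhere dense subspace $A_{j}(\mathbb{C}^{n})$), so that the invertible words alone have dense orbit and hence $\{\sigma v : \sigma \ \text{invertible word}\}$ is a dense set of vectors each of which is hypercyclic, since $\overline{G(\sigma v)}=\sigma(\overline{G(v)})=\mathbb{C}^{n}$; and (b) the requirement $k^{(1)}_{m}+\dots+k^{(p)}_{m}\to+\infty$ in the definition of $J_{G}$, which you secure with the finiteness argument: if only words of bounded total index hit an open set $V$, then $V$ would be contained in a finite set, absurd. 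One cosmetic remark: your deletion iteration tacitly needs at least one invertible generator to survive; this is automatic, because if all generators were non-invertible the entire orbit would lie in the union of the finitely many proper subspaces $A_{j}(\mathbb{C}^{n})$ (together with at most the point $v$), contradicting density of $G(v)$ — worth a sentence in a final write-up, but not a gap.
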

\medskip

\begin{proof}[Proof of Theorem ~\ref{t:1}] One can assume, by Proposition \ref{p:1}, that $G$
 is a sub-semigroup of $\mathcal{K}_{\eta,r}(\mathbb{C})$. Suppose that $J_{G}(u)= \mathbb{C}^{n}$ where $u\in U$. Then by Proposition \ref{p:25}, $\mathrm{rank}(F_{G_{k}}) = n_{k}-1$, for every $k=1,\dots,r$.
  Let $y\in U$, then there exist two sequences $(B_{m})_{m}\subset G$ and $(x_{m})_{m}\subset
\mathbb{C}^{n}$ satisfying: $$\underset{m\to+\infty}\lim x_{m}=u\
\ \  \mathrm{and}\ \ \ \underset{m\to+\infty}\lim B_{m}x_{m}=y.$$

 So by Corollary \ref{c:36}, $(B_{m})_{m\geq 1}$ is bounded:
$\|B_{m}\|\leq M$ for some $M>0$ where is $\| . \|$ is the Euclidean norm on $\mathbb{C}^{n}$. Then
\begin{align*}
\|B_{m}u-y\|& =\|B_{m}v-B_{m}x_{m}+B_{m}x_{m}-y\|\\
 \ & \leq\|B_{m}u-B_{m}x_{m}\| + \|B_{m}x_{m}-y\|\\
 \ & \leq \|B_{m}\| \|u-x_{m}\|
+ \|B_{m}x_{m}-y\|\\
 \ &  \leq M \|u-x_{m}\|
+ \|B_{m}x_{m}-y\|
\end{align*}
Thus $\underset{m\to+\infty}\lim B_{m}u=y$ and so $y\in
\overline{G(u)}$. It follows that $U\subset\overline{G(u)}$ and as $\overline{U} = \mathbb{C}^{n}$, we get $\overline{G(u)} = \mathbb{C}^{n}$.
\end{proof}
\
\\
\\
{\it Proof of Corollary ~\ref{c:1}.} $(i)\Longrightarrow(ii)$ follows from Proposition ~\ref{p:101}.\
\\
$(ii)\Longrightarrow(iii):$ this results from Theorem \ref{t:1}.
$(iii)\Longrightarrow(i)$: is clear.
\
\\
\\
{\it Proof of Corollary ~\ref{c:2}.} If $G$ is not hypercyclic then by Theorem \ref{t:1},
\
\\
$J_{G}(v)\neq \mathbb{C}^{n}$  for any $v\in V$, thus
$V\cap E = \emptyset$ and therefore $E\subset\mathbb{C}^{n}\backslash V = \underset{k=1}{\overset{r}{\bigcup}}H_{k}$ with
$H_{k} = P\big(\left\{x = [x_{1},\dots, x_{r}]^{T}, \ x_{i}\in\mathbb{C}^{n_{i}},
\ \mathrm{if}\  i\neq k, \mathrm{and }~ x_{k}\in\{0\}\times\mathbb{C}^{n_{k}-1}\right\} \big)$. \qed
\medskip

\section{\bf Proof of Theorem \ref{t:12} and Proposition \ref{p:0}}
\medskip

For the proof of Theorem \ref{t:12}, we will make use of the following result:
\medskip


\begin{lem}\label{l:78}\cite{fe1}. If $a\in \mathbb{C}$ with $|a|> 1$, then there is a dense set
\
\\
$\Delta_{a}\subset \{z\in \mathbb{C},\ \ |z|<1\}$ such that for any
$b\in \Delta_{a}$, we have that
\
\\
$\{a^{k}b^{l}: k,l\in \mathbb{N}\}$ is dense in $\mathbb{C}$.
\end{lem}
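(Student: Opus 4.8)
The plan is to prove this by a Baire category argument combined with an explicit root-solving step. I would work in the punctured disk $D^{*}=\{b\in\mathbb{C}:0<|b|<1\}$ (the choice $b=0$ gives a non-dense orbit), noting that a dense $G_{\delta}$ subset of $D^{*}$ is automatically dense in $\{|z|<1\}$. Fix the countable family $\mathcal{F}$ of balls $B(w,1/N)$ with $w\in(\mathbb{Q}+i\mathbb{Q})\setminus\{0\}$ and $N\in\mathbb{N}_{0}$; since the centers form a dense subset of $\mathbb{C}$, any set $\{a^{k}b^{l}:k,l\in\mathbb{N}\}$ meeting every member of $\mathcal{F}$ is automatically dense in $\mathbb{C}$. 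For each $B\in\mathcal{F}$ put $U_{B}=\{b\in D^{*}:a^{k}b^{l}\in B\text{ for some }k,l\in\mathbb{N}\}$; as a union over $(k,l)$ of preimages of the open set $B$ under the continuous maps $b\mapsto a^{k}b^{l}$, each $U_{B}$ is open. If every $U_{B}$ is dense in $D^{*}$, then $\Delta_{a}:=\bigcap_{B\in\mathcal{F}}U_{B}$ is a dense $G_{\delta}$ by the Baire category theorem, and every $b\in\Delta_{a}$ has the required property.

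The whole proof then reduces to showing that each $U_{B}$ is dense. I would fix $b_{0}\in D^{*}$ and a small $\delta>0$ with $|b_{0}|+\delta<1$, and a ball $B=B(w,\eps)$ with $w\neq 0$. Set $L=\log|a|>0$ and $\rho=\log(1/|b_{0}|)/L>0$. The key idea is not to approximate $w$ but to hit it \emph{exactly}: for integers $k,l$ one solves $b^{l}=w/a^{k}$, so that $a^{k}b^{l}=w\in B$ by construction. Taking $l$ large and $k=\lfloor\rho l\rfloor$ forces $k/l\to\rho$, and the $l$ solutions of $b^{l}=w/a^{k}$ all lie on the circle of radius $r=|w|^{1/l}\,|a|^{-k/l}$, whose logarithm $\tfrac1l\log|w|-\tfrac{k}{l}L$ tends to $-\rho L=\log|b_{0}|$; hence $r\to|b_{0}|$.

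These $l$ solutions are equally spaced around that circle with angular gap $2\pi/l$, so for each $l$ one may select the solution $b$ whose argument lies within $\pi/l$ of $\arg b_{0}$. Together with $r\to|b_{0}|$ this yields $b\to b_{0}$ as $l\to\infty$; in particular, for $l$ large enough one obtains $b\in D^{*}$ with $|b-b_{0}|<\delta$ and $a^{k}b^{l}=w\in B$, that is, $b\in U_{B}\cap B(b_{0},\delta)$. This establishes that $U_{B}$ is dense and completes the argument.

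I expect the main obstacle to be the \emph{simultaneous} control of the modulus and the argument of the perturbed base point $b$: one must keep $|b|$ close to $|b_{0}|$ (and below $1$) while also steering $\arg b$ toward $\arg b_{0}$, all under the rigid constraint $b^{l}=w/a^{k}$. The two degrees of freedom $k$ and $l$ are exactly what resolve these two requirements—the ratio $k/l\to\rho$ pins down the modulus, while the vanishing root-spacing $2\pi/l$ together with $|w|^{1/l}\to1$ makes the argument freely adjustable in the limit.
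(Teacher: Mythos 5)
Your proof is correct, but it cannot be compared with a proof in the paper because the paper offers none: the lemma is quoted as a black box from Feldman's article \cite{fe1} and used only in the proof of Theorem 4.2. So what you have produced is a genuinely independent, self-contained argument. Feldman's route in \cite{fe1} goes through his analysis of hypercyclic tuples of diagonal operators, where density of sets of the form $\{a^{k}b^{l}\}$ in $\mathbb{C}$ is tied, after taking logarithms, to Kronecker-type density criteria for subsemigroups of $\mathbb{R}^{2}$ involving independence conditions on $\log|a|,\log|b|$ and the arguments; the lemma then follows by exhibiting a dense set of admissible $b$. Your argument replaces all of this with a Baire category scheme plus an exact root-solving step, and every step checks out: $D^{*}$ is a Baire space dense in the disk; each $U_{B}$ is open as a union of preimages of $B$; and density of $U_{B}$ follows because, with $k=\lfloor\rho l\rfloor$, the radius $r_{l}=|w|^{1/l}|a|^{-k/l}$ of the solution circle of $b^{l}=w/a^{k}$ satisfies $\log r_{l}=\tfrac{1}{l}\log|w|-\tfrac{k}{l}L\to\log|b_{0}|$, while the $2\pi/l$ spacing of the roots lets you match $\arg b_{0}$ to within $\pi/l$, so the selected root converges to $b_{0}$ and eventually lies in $D^{*}\cap B(b_{0},\delta)$ with $a^{k}b^{l}=w\in B$ exactly. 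Two small bonuses of your approach are worth noting: it produces a dense $G_{\delta}$ set $\Delta_{a}$ (stronger than the stated ``dense set''), and it makes the way the paper actually invokes the lemma--choosing $b$ with $1/|a|<|b|<1$--immediate, since a set dense in the disk must meet that annulus. The trade-off is that Feldman's characterization tells you exactly which $b$ work, whereas your construction only certifies a residual set of them; for the purposes of this paper, either suffices.
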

\medskip
\
\\
{\it Proof of Theorem ~\ref{t:12}.} Let $a\in \mathbb{C}$ with $|a|>1$. By Lemma \ref{l:78}, there exists
$b\in \mathbb{C}$ with $\dfrac{1}{|a|}<|b|<1$ such that $\{a^{k}b^{l}:\ \ k,l\in\mathbb{N}\}$ is dense in $\mathbb{C}$.
 Consider the abelian sub-semigroup $G$ of $\mathcal{K}^{*}_{\eta,r}(\mathbb{C})$ generated by
$B, A_{1},\dots, A_{n}$, where $$B= bI_{n} ~~~~ \textrm{and} ~~~~ A_{k} = \textrm{diag}(\underset{(k-1)-terms}{\underbrace{a,\dots\dots,a}},1,a\dots,a),\
k=1,\dots, n.$$
\
\\
$\bullet$ First, we will show that $G$ is not hypercyclic: for this, it is equivalent to prove, by Corollary \ref{c:1}, that
$\overline{G(u_{0})}\neq \mathbb{C}^{n}$ where $u_{0}= [1,\dots,1]^{T}$: We have
$$G(u_{0}) = \left\{[b^{m}a^{k_{2}}\dots a^{k_{n}};\ b^{m}a^{k_{1}}a^{k_{3}}\dots a^{k_{n}};\dots\dots;\ b^{m}a^{k_{1}}\dots a^{k_{n-1}}]^{T}:~
m,k_{1},\dots, k_{n}\in\mathbb{N}\right\}$$
Observe that for every $x = [x_{1},\dots,x_{n}]^{T}\in G(u_{0})$, we have $\dfrac{x_{i}}{x_{n}}= a^{k_{n}-k_{i}}$, $i = 1,\dots,n$. So
$G(u_{0})\subset F$ where $$F = \left\{\left[a^{\ell_{1}}\lambda;\ a^{\ell_{2}}\lambda;\dots\dots;\ a^{\ell_{n}}\lambda\right]^{T}: \lambda\in \mathbb{C},
\ \ell_{1},\dots, \ell_{n}\in\mathbb{Z}\right\}.$$
We set $$D_{k} = \textrm{diag}(\underset{(k-1)-\textrm{terms}}{\underbrace{1,\dots\dots,1}},a,1\dots,1),\  k=1,\dots, n$$
and
$$\Delta = \{[\lambda,\dots,\lambda]^{T}:\ \ \ \lambda\in \mathbb{C}\}.$$ Then we have
$$F = \underset{\underset{1\leq i\leq n}{\ell_{i}\in\mathbb{Z}} }{\bigcup}D_{1}^{\ell_{1}}\dots D_{n}^{\ell_{n}}(\Delta).$$

It is plain  that $\overline{F} = F\cup \underset{k=1}{\overset{n}{\bigcup}}E_{k},$ where
$$E_{k}= \{x=[x_{1},\dots,x_{n}]^{T}: \ x_{i}\in\mathbb{C},\ \mathrm{if}\ i\neq k ~\textrm{and} \ x_{k} = 0\}.$$  Since
 $\overset{\circ}{E_{k}}= \emptyset$, it follows that $\overset{\circ}{\overbrace{{\underset{k=1}{\overset{n}{\bigcup}}E_{k}}}} = \emptyset$, where $\overset{\circ}{M}$ denotes the interior of
 a subset $M\subset \mathbb{C}^{n}$. Moreover,
since $\overset{\circ}{\overbrace{D_{1}^{k_{1}}\dots D_{n}^{k_{n}}(\Delta)}} = \emptyset$, it follows, by Baire's theorem, that
$\overset{\circ}{\overline{G(u_{0})}} = \emptyset$ and so $\overline{G(u_{0})}\neq \mathbb{C}^{n}$.
\
\\
\\
$\bullet$ Second, we will show that $J_{G}(e_{k}) = \mathbb{C}^{n}$ for every $k = 1,\dots, n$.
 Fix a vector $y = [y_{1},\dots, y_{n}]^{T}\in \mathbb{C}^{n}$.
Choose two sequences of positive integers $(i_{m})_{m\in\mathbb{N}}$ and
 $(j_{m})_{m\in\mathbb{N}}$ with $i_{m},~ j_{m}\rightarrow +\infty$ such that
$\underset{m\to +\infty}\lim a^{i_{m}}b^{j_{m}}= y_{k}$. As $n\geq 2$, one can choose $s\in\{1,\dots,n\}$ such
that $s\neq k$. We let $B_{m}= A_{s}^{i_{m}}B^{j_{m}}A_{k}^{j_{m}}$. Then we have $B_{m} = \mathrm{diag}(a_{m,1},\dots,a_{m,n})$
\
\\
where
 $$a_{m,l}= \begin{cases} a^{i_{m}}a^{j_{m}}b^{j_{m}} \ & \textrm{if}\ \ l\neq k, s \\
          a^{j_{m}}b^{j_{m}} \ & \textrm{if}\  \ l= s\\
          a^{i_{m}}b^{j_{m}} \ & \textrm{if} \ \ l= k
                 \end{cases}$$
Hence, $\underset{m\to +\infty}\lim a_{m,k}= y_{k} \ (3)$. Moreover, since $\dfrac{1}{|a|}<|b|<1$, we have $\underset{m\to+\infty}\lim a_{m,l} = +\infty$ for every $l\neq k$ and hence
$\underset{m\to+\infty}\lim x_{m} = e_{k}$. We set $x_{m} = \left(x_{m,1},\dots, x_{m,n} \right)$  where
 $$x_{m,l}= \begin{cases}
\frac{y_{l}}{a_{m,l}} \ & \textrm{if}\ \ l\neq k \\
1 \ & \textrm{if} \ \ l= k
\end{cases}$$
Then
$B_{m}x_{m} = \left(y_{1},\dots,y_{k-1},a_{m,k},
y_{k+1},\dots, y_{n}\right)$, and by $(3)$, it follows that
\
\\
$\underset{m\to+\infty}\lim B_{m}x_{m}=y$. We conclude that
$y\in J_{G}(e_{k})$ and therefore $J_{G}(e_{k})= \mathbb{C}^{n}$, for every $k = 1,\dots, n$. \qed
\
\\
\medskip
\
\\
{\it Proof of Proposition ~\ref{p:0}.}  Since $(e^{\prime}_{1},\dots,e^{\prime}_{n})$ is a basis of $\mathbb{C}^{n}$, there exists
$i_{0}\in \{1,\dots, n\}$ such that $e^{\prime}_{i_{0}}\in \mathbb{C}^{*}\times \mathbb{C}^{n-1}$. As
$V = U = \mathbb{C}^{*}\times \mathbb{C}^{n-1}$ and
\
\\
J$_{G}(e^{\prime}_{i_{0}}) = \mathbb{C}^{n}$ then by
Theorem ~\ref{t:1}, $\overline{G(e^{\prime}_{i_{0}})} = \mathbb{C}^{n}$ and hence $G$ is hypercyclic. \qed
\
\\

The following questions arose naturally.
\medskip
\
\\
\textbf{Question 1}. Find analogous to Theorems \ref{t:1} and \ref{t:12} for the real case?
\medskip
\
\\
\textbf{Question 2}. Let $1\leq r\leq n$ be an integer. Is it true that there exists a finitely generated
abelian semigroup $G$ of $\mathcal{K}_{\eta,r}(\mathbb{C})$ which is not hypercyclic such that
J$_{G}(e_{k}) = \mathbb{C}^{n}$ for every $k=1,\dots, n$? Similarly for $\mathbb{R}^{n}$?

Notice that for $r=n$, this question is answered positively (Theorem \ref{t:12}) However, for $r=1$, it is answered negatively (Proposition \ref{p:0}).
\medskip
\
\\

\bibliographystyle{amsplain}

\begin{thebibliography}{9}
\medskip

\bibitem{aAhM06} A. Ayadi and H. Marzougui, \emph{Dense orbits for abelian subgroups of
GL(n, $\mathbb{C}$)}. Foliations 2005: 47-69. World Scientific, Hackensack, NJ,2006.
\bibitem{aAhM10} A. Ayadi and H. Marzougui, \emph{Hypercyclic abelian semigroups of matrices on
$\mathbb{C}^{n}$}. Preprint ICTP, IC/2010/059.
\bibitem{bm} F. Bayart, E. Matheron, Dynamics of Linear Operators. Cambridge Tracts in Math., 179, Cambridge University Press, 2009.
\bibitem{cm2} G. Costakis, A. Manoussos, \textit{J -class operators and hypercyclicity}, J. Operator Theory, to appear.
\bibitem{chm2} G. Costakis, D. Hadjiloucas and A. Manoussos, \emph{On the minimal number of matrices which form a
locally hypercyclic, non-hypercyclic tuple}. J. Math. Anal. Appl.
\textbf{365} (2010) 229--237.

\bibitem{fe1} N.S. Feldman, \emph{Hypercyclic tuples of operators and somewhere dense orbits}. J. Math. Anal. Appl. \textbf{346} (2008), 82--98.

\bibitem{p} K.G. Grosse-Herdmann and A. Peris, Linear Chaos. Universitext, Springer, to appear.

\end{thebibliography}
\vskip 0,6 cm

\medskip

\end{document}